\newtheorem{theorem}{Theorem}
\newtheorem{lemma}[theorem]{Lemma}
\newtheorem{corollary}[theorem]{Corollary}
\newcommand{\ilm}[3]{{\rm ILM}_{#1,#2}(#3)}
\newcommand{\ilt}[2]{{\rm ILT}_{#1}(#2)}
\newcommand{\ilat}[2]{{\rm ILAT}_{#1}(#2)}
\newcommand{\vol}{{\rm vol}}
\begin{document}
	
\title{The Iterated Local Model for Social Networks}
\author[Anthony Bonato]{Anthony Bonato}
\address{Department of Mathematics\\
Ryerson University\\
Toronto, ON\\
Canada, M5B 2K3} \email{abonato@ryerson.ca}
\author[Huda Chuangpishit]{Huda Chuangpishit}
\address{Department of Mathematics\\
Ryerson University\\
Toronto, ON\\
Canada, M5B 2K3} \email{h.chuang@ryerson.ca}
\author[Sean English]{Sean English}
\address{Department of Mathematics\\
Ryerson University\\
Toronto, ON\\
Canada, M5B 2K3} \email{sean.english@ryerson.ca}
\author[Bill Kay]{\,\hspace{3cm}\,Bill Kay}
\address{Department of Mathematics\\
Ryerson University\\
Toronto, ON\\
Canada, M5B 2K3} \email{bill.kay@ryerson.ca}
\author[Erin Meger]{Erin Meger}
\address{Department of Mathematics\\
Ryerson University\\
Toronto, ON\\
Canada, M5B 2K3} \email{erin.k.meger@ryerson.ca}
\keywords{graphs, social networks, transitivity, anti-transitivity, densification, clustering coefficient, Hamiltonian graph, domination number, spectral expansion}
\thanks{The authors gratefully acknowledge support from NSERC, Ryerson University, and the Fields Institute for Research in Mathematical Sciences}
\subjclass{05C82,05C90,05C42,05C69}

\begin{abstract}
On-line social networks, such as in Facebook and Twitter, are often studied from the perspective of friendship ties between agents in the network. Adversarial ties, however, also play an important role in the structure and function of social networks, but are often hidden. Underlying generative mechanisms of social networks are predicted by structural balance theory, which postulates that triads of agents, prefer to be transitive, where friends of friends are more likely friends, or anti-transitive, where adversaries of adversaries become friends. The previously proposed Iterated Local Transitivity (ILT) and Iterated Local Anti-Transitivity (ILAT) models incorporated transitivity and anti-transitivity, respectively, as evolutionary mechanisms. These models resulted in graphs with many observable properties of social networks, such as low diameter, high clustering, and densification.

We propose a new, generative model, referred to as the Iterated Local Model (ILM) for social networks synthesizing both transitive and anti-transitive triads over time. In ILM, we are given a countably infinite binary sequence as input, and that sequence determines whether we apply a transitive or an anti-transitive step. The resulting model exhibits many properties of complex networks observed in the ILT and ILAT models. In particular, for any input binary sequence, we show that asymptotically the model generates finite graphs that densify, have clustering coefficient bounded away from 0, have diameter at most 3, and exhibit bad spectral expansion. We also give a thorough analysis of the chromatic number, domination number, Hamiltonicity, and isomorphism types of induced subgraphs of ILM graphs.
\end{abstract}
	
\maketitle

\section{Introduction}

Friendship ties are essential constructs in on-line social networks, as witnessed by friendship between Facebook users or followers on Twitter. Another pervasive aspect of social networks are negative ties, where users may be viewed as adversaries, competitors, or enemies. For further background on ties in social networks and more generally, complex networks, see \cite{bbook,at,ek}. Negative ties are often hidden, but may have a powerful influence on the social network. An early example of how negative ties influences networks structure comes from the famous Zachary Karate network, where an adversarial relationship assisted in the formation of two distinct communities \cite{zachary}.

Complex networks, including on-line social networks, contain numerous mechanisms governing edge formation. In the literature, models for complex networks have exploited principles of preferential attachment \cite{ba,bol}, copying or duplication \cite{ilt,ilat,CL}, or geometric settings \cite{SPA,MGEOP,geop,k,mmp,Zhang}. The majority of complex network models are premised on the formation of edges via positive ties. Structural balance theory in social network analysis cites several mechanisms to complete \emph{triads}, or triples of vertices. Vertices may have positive or negative ties, and triads are balanced if the signed product of their ties is positive. Hence, balanced triads are those consisting of all friends, or two enemies and a friend. These triads reflect the folkloric adages ``friends of friends are friends'' and ``enemies of enemies are more likely friends,'' respectively. Such triad closure is suggestive of an analysis of adversarial relationships between vertices as another model for edge formation. For an example, consider market graphs, the vertices are stocks, and stocks are
adjacent as a function of their correlation measured by a threshold value $\theta \in (0,1).$ Market graphs were considered in the case of negatively correlated (or competing) stocks, where stocks
are adjacent if $\theta < \alpha,$ for some positive $\alpha$; see \cite{market}. In social networks, negative correlation corresponds to enmity or rivalry between agents. We may also consider opposing networks formed by nation states or rival organizations, or alliances formed by mutually shared adversaries as in the game show Survivor~\cite{surv,guo}.

Transitivity is a pervasive and folkloric notion in social networks, and postulates triads with all positive signs. A simplified, deterministic model for transitivity
was posed in \cite{ilt1,ilt}, where vertices are added over time, and for each vertex $x$, there is a \emph{clone} that is adjacent to $x$ and all of its neighbors. The resulting Iterated Local Transitivity (or ILT) model, while elementary to define, simulates many properties of social and other complex networks. For example, as shown in \cite{ilt}, graphs generated by the model densify over time, and exhibit bad spectral expansion. In addition, the ILT model generates graphs with the small world property, which requires the graphs to have low diameter and dense neighbor sets. For further properties of the ILT model, see \cite{BJR,mason}.

Adversarial relationships may be modeled by non-adjacency, and so we have the resulting closure of the triad as described in Figure~\ref{at}.
\begin{figure}[h!]
\begin{centering}
    \includegraphics{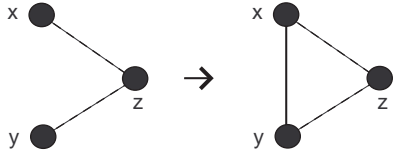}
    \caption{Vertices $x$ and $y$ share $z$ as a mutual adversary (denoted by dotted lines), and so form an alliance (denoted by an edge).}
    \label{at}
    \end{centering}
\end{figure}
A simplified, deterministic model simulating anti-transitivity in complex networks was introduced in \cite{ilat}. The Iterated Local Anti-Transitivity (or ILAT) model duplicates vertices $x$ in each time-step
by forming \emph{anti-clone} vertices. The anti-clone of $x$ is adjacent to the non-neighbor set of $x$. Perhaps unexpectedly, graphs generated by ILAT model exhibit many properties of complex networks such as densification, small world properties, and bad spectral expansion.

In the present paper, we consider a new model synthesizing both the ILT and ILAT models, allowing for both transitive and anti-transitive steps over time. We refer to this model as the Iterated Local Model (ILM), and define it precisely in the next subsection. Informally, in ILM we are given as input an infinite binary sequence. For each positive entry in the sequence, we take a transitive, ILT-type step. Otherwise, we take an anti-transitive, ILAT-type step. Hence, ILM contains both the ILT and ILAT model as special cases, but includes infinitely many (in fact, uncountably many) other model variants as a function of the infinite binary sequence.

We consider only finite, simple, undirected graphs throughout the paper. For a graph $G$ with vertex $v$, define the \emph{neighbor set} of $v$, written $N_G(v)$, to be $\{ u \in V(G): uv \in E(G)\}.$  The
\emph{closed neighbor set} of $v$, written $N_G[v],$ is the set $N_G(v) \cup \{v\}.$ Given a graph $G$, we denote its complement by $\overline{G}.$ When it is clear from context, we suppress the subscript $G$. For background on graph theory, the reader is directed to~\cite{west}. Additional background on complex networks may be found in the book
\cite{bbook}.
	
\subsection{The Iterated Local Model}

We now precisely define the iterated local model (ILM). First, we must define two iterative procedures on a graph $G$ by considering steps that are locally transitive or locally anti-transitive.

We define a graph $\mathrm{LT}(G)$ as follows. For each $x \in V(G)$, add a new vertex $x^\prime$ to the vertex set of $\mathrm{LT}(G)$ such that $x'$ is adjacent to all neighbors of $x$ in $G.$ In particular, $N_{\mathrm{LT}(G)}(x^\prime) = N_G[x]$. The vertex $x^\prime$ is called the {\em transitive clone} of $x$, and the resulting graph $\mathrm{LT}(G)$ is called the graph obtained from $G$ by applying one {\em locally transitive step}.

Analogously, we define the graph $\mathrm{LAT}(G)$ as follows. For each $x \in V(G)$, add a new vertex $x^\ast$ that is adjacent to all non-neighbors of $x$ in $G.$ In particular, $N_{\mathrm{LAT}(G)}(x^\ast) = V(G)\backslash N_G[x]$. The vertex $x^\ast$ is called the {\em anti-transitive clone} of $x$, and the resulting graph $\mathrm{LAT}(G)$ is called the graph obtained from $G$ by applying one {\em locally anti-transitive step}.

Note that the ILT model is defined precisely by applying iterative locally transitive steps; an analogous statement holds for the ILAT model. Observe also that clones or anti-clones introduced in the same time-step are pairwise non-adjacent.

\begin{figure}[h!]
\begin{center}

\subfloat{{\includegraphics{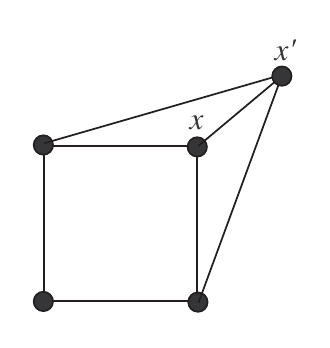} }}%
    \qquad
    \subfloat{{\includegraphics{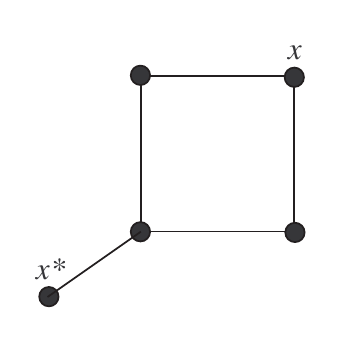} }}%

\caption{The vertex $x'$ is the clone of $x$ and the vertex $x^*$ is the anti-clone of $x$.}\label{fclone}
\end{center}
\end{figure}

Fix an infinite binary sequence $S=(s_0, s_1, s_2, \ldots)$, which we refer to as an \emph{input sequence}. The $t\textsuperscript{th}$ \emph{Iterated Local Model}(ILM) graph, denoted $\ilm{t}{S}{G}$ is defined recursively. In the case $t=0,$ we have that $\ilm0S{G}=G$. For $t\geq 0$, we have that
		\[
		\ilm{t+1}{S}{G} = \begin{cases} \mathrm{LAT}(\ilm{t}{S}{G}) & \text{ if } s_{t} = 0,\\
		\mathrm{LT}(\ilm{t}{S}{G}) & \text{ otherwise.}
		\end{cases}
		\]
Hence, an instance of 1 in the sequence results in a transitive step; otherwise, an anti-transitive step is taken. If the sequence $S$ contains only 1's, then the resulting graph at each time-step is isomorphic to the graph from the same time-step of the ILT model, thus, we write $\ilt{t}G=\ilm{t}SG$. Similarly, if the sequence $S$ contains only 0's, the resulting graph is isomorphic to the graph from the same time-step of the ILAT model and we write $\ilat{t}G=\ilm{t}SG$. We use the simplified notation \emph{ILM graph} for a graph $\ilm{t}{S}{G}$ for any choice of $t$, $S$, or $G$. \emph{ILT} and \emph{ILAT graphs} are defined in an analogous fashion.

\begin{figure}[h!]
\begin{center}
\epsfig{figure=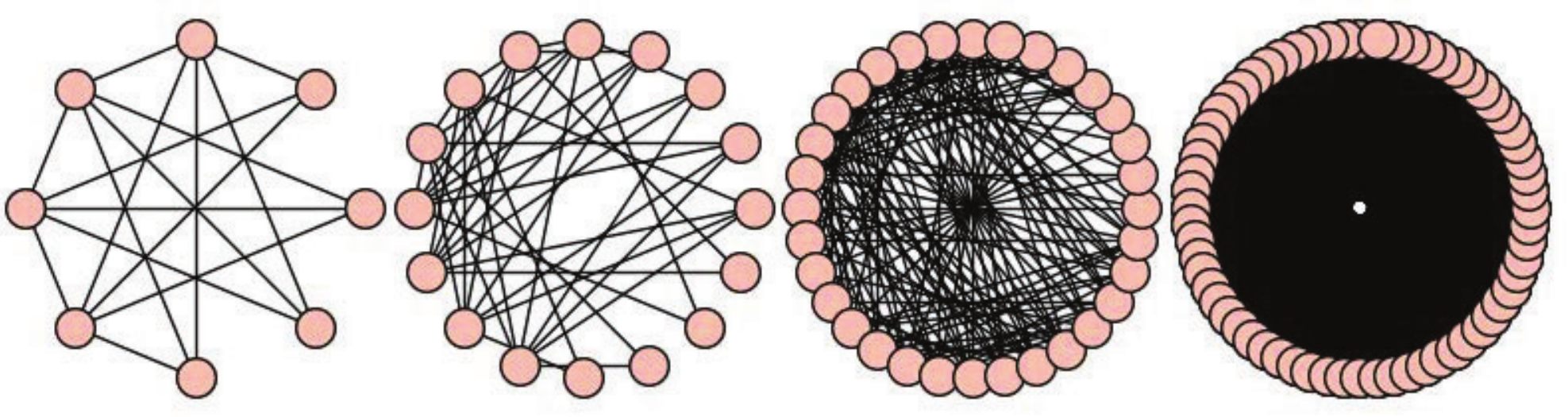,scale=0.75} \caption{ILM graphs at time-steps 1, 2, 3, and 4, with input sequence $S=(0,1,0,1,\ldots)$ and initial graph $C_4$ (not shown).}\label{fclone}
\end{center}
\end{figure}

We will denote the order of the $t\textsuperscript{th}$ ILM graph as $n_{t,S,G}=|V(\ilm{t}SG)|=2^t|V(G)|,$ and denote the size (that is, number of edges) by $e_{t,S,G}=|E(\ilm{t}SG)|.$ The (open) neighborhood of a vertex at time $t$ by
\[
N_{t,S,G}(v)=\{u\in V(\ilm{t}SG): uv\in E(\ilm{t}SG)\},
\]
and the closed neighborhood by $N_{t,S,G}[v]=N_{t,S,G}(v)\cup\{v\}.$ The degree of a vertex $v \in V(\ilm{t}SG)$ is written as $\deg_{t,S,G}(v)=|N_{t,S,G}(v)|.$  To simplify this notation, when the sequence $S$ and initial graph $G$ are clear from context, we will simply write $n_t$, $e_t$ $N_t(v)$, $N_t[v]$ and $\deg_t(v)$ for the order, the size, the neighborhoods, and the degree of a vertex at time $t$, respectively. We emphasize that $\ilm{t-1}SG$ is an induced subgraph of $\ilm{t}SG$, and as such we will always consider $\ilm{t-1}SG$ to be embedded in $\ilm{t}SG$ in the natural way. That is, expressions of the form $\deg_{t-1}(v)$ may be used for vertices $v\in V(\ilm{t}SG)$ to refer to the degree in the induced subgraph isomorphic to $\ilm{t-1}SG$.

In the present paper, we analyze various properties of ILM graphs. While the ILT and ILAT graphs satisfy various properties such as low diameter and densification, those statements are not a priori obvious for the ILM model.  We prove in Theorem~\ref{theorem density} that ILM generates graphs that densify over time. We do this by deriving the asymptotic size of ILM graphs given by the following expression: \[ |E({\ilm{t}{S}{G}})|=\Theta\left(2^{t+\beta}\left(\frac{3}2\right)^{t-\beta}\right),\]
where $\beta$ is the largest index less than $t$ such that $s_{\beta}=0$. The clustering coefficient of ILM graphs is studied in Section~\ref{Csec}. We say $S$ has \emph{bounded gaps between $0$'s}, or simply \emph{bounded gaps} if there exists some constant $k=k(S)$ such that there is no string of $k$ contiguous $1$'s. In contrast to the known clustering results for the ILT model in \cite{ilt}, we will see that for any sequence with bounded gaps, the clustering coefficient is bounded away from $0$. Our results here are also the first rigorously presented results for the clustering coefficient of the ILAT model \cite{ilat}; further, our results give an improvement on bounds for the clustering coefficient of ILT graphs.

Graph theoretical properties of ILM graphs are of interest in their own right. In Section~\ref{GPsec}, we explore classical graph parameters, including the chromatic and domination numbers, and diameter. The domination number of ILM graphs is eventually either 2 or 3, and Theorem~\ref{theorem if domination=2} classifies exactly when each value occurs. The diameter of an ILM graph eventually becomes $3$, usually after only two anti-transitive steps; see Theorem~\ref{theorem diameter after two time-steps}. Bad spectral expansion for the ILM graphs is proven in Section~\ref{SEsec}. Section~\ref{SRsec} includes a discussion of the Hamiltonicity of ILM graphs, proving that eventually, all ILM graphs are Hamiltonian. In addition, ILM graphs eventually contain isomorphic copies of any fixed finite graph. Hamiltonicity and induced subgraph properties were not perviously investigated in the ILT or ILAT models. We finish with a section of open problems and further directions.

\section{Density and Densification}\label{Dsec}

Complex networks often exhibit densification, where the number of edges grows faster than the number of vertices; see \cite{les1}. Both the ILT \cite{ilt} and ILAT models \cite{ilat} generate sequences of graphs whose edges grow super-linearly in the number of vertices. We now show a number of results regarding the number of edges in an ILM graph. The following theorem from \cite{ilt} gives us the average degree of an ILT graph, which will be useful for studying ILM graphs. We define the \emph{volume} of a graph $G$ by
\[
\mathrm{Vol}(G)=\sum_{x\in V(G)}\deg(x) = 2|E(G)|.
\]
Note that the average degree of $G$ equals $\mathrm{Vol}(G)/|V(G)|.$

\begin{theorem}\cite{ilt}\label{theorem ILT average degree}
	Let $G$ be a graph. For all integers $t\geq 0$, the average degree of $\ilt{t}G$ equals
	\[
	\left(\frac{3}2\right)^t\left(\frac{\mathrm{Vol}(G)}{n_0}+2\right)-2.
	\]
\end{theorem}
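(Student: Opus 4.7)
The plan is to induct on $t$, reducing the statement to a simple first-order linear recurrence on the volume (equivalently, on the average degree). The whole argument rests on understanding how a single locally transitive step changes the degree of each vertex.

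First I would fix $t \geq 0$ and analyze the LT step applied to $H := \ilt{t}{G}$. For any $x \in V(H)$, the transitive clone $x'$ satisfies $N_{\mathrm{LT}(H)}(x') = N_H[x]$ by definition, so $\deg_{\mathrm{LT}(H)}(x') = \deg_H(x) + 1$. For the original vertex $x$, its neighborhood in $\mathrm{LT}(H)$ consists of its old neighbors plus exactly those clones $y'$ for which $x \in N_H[y]$, which by symmetry is the set $\{y' : y \in N_H[x]\}$. Hence $\deg_{\mathrm{LT}(H)}(x) = \deg_H(x) + |N_H[x]| = 2\deg_H(x) + 1$. Summing over $V(H)$ and $V(\mathrm{LT}(H)) \setminus V(H)$ gives the volume recurrence
\[
\mathrm{Vol}(\ilt{t+1}{G}) = 3\,\mathrm{Vol}(\ilt{t}{G}) + 2n_t,
\]
where $n_t = 2^t n_0$.

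Next I would convert this to a recurrence on the average degree $d_t = \mathrm{Vol}(\ilt{t}{G})/n_t$. Dividing the previous display by $n_{t+1} = 2n_t$ yields
\[
d_{t+1} = \tfrac{3}{2}\, d_t + 1,
\]
with $d_0 = \mathrm{Vol}(G)/n_0$. This linear recurrence has the fixed point $d^\ast = -2$ (solving $d^\ast = \tfrac32 d^\ast + 1$), and the homogeneous solution has ratio $3/2$, so
\[
d_t + 2 = \left(\tfrac{3}{2}\right)^t \! \left( d_0 + 2 \right) = \left(\tfrac{3}{2}\right)^t \! \left( \tfrac{\mathrm{Vol}(G)}{n_0} + 2 \right),
\]
which rearranges to the claimed formula. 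A clean induction on $t$ then verifies the closed form.

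There is no serious obstacle here; the only place where care is needed is the double-counting when passing from the per-vertex degree formulas to $\mathrm{Vol}(\ilt{t+1}{G})$, in particular remembering that each original vertex $x$ picks up exactly $\deg_H(x)+1$ new neighbors (the clones of vertices in $N_H[x]$) and that each clone $x'$ contributes $\deg_H(x)+1$ to the new volume as well, so the total contribution of the pair $\{x,x'\}$ to the new volume is $3\deg_H(x)+2$. Summing yields $3\,\mathrm{Vol}(H) + 2n_t$, which is the one computation I would double-check before invoking the recurrence.
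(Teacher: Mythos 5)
Your proof is correct: the per-vertex computations ($\deg(x')=\deg_H(x)+1$ for the clone and $\deg(x)=2\deg_H(x)+1$ for the original vertex), the resulting volume recurrence $\mathrm{Vol}(\mathrm{LT}(H))=3\,\mathrm{Vol}(H)+2n_t$, and the solution of $d_{t+1}=\tfrac32 d_t+1$ via the fixed point $d^\ast=-2$ all check out and yield exactly the stated closed form. The paper cites this theorem from the ILT reference without reproducing a proof, and your derivation is the standard one; note only that your recurrence corresponds to $|E(\mathrm{LT}(G))|=3e+n$, which is what the theorem's ``$+2$'' requires, whereas the paper's later displayed edge recurrence for $\mathrm{LT}(G)$ has $3e-n$, an apparent sign typo there rather than an error on your part.
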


The analogous theorem for ILAT graphs is the following.

\begin{theorem}\cite{ilat}\label{theorem ILAT average degree}
	Let $G$ be a graph. For all integers $t\geq 0$, the average degree of $\ilat{t}G$ equals
	\[
	2^{t}\left( \frac{2n_0}{5} - o(1) \right).
	\]
\end{theorem}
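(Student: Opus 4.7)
The plan is to derive a recurrence for the volume $V_t := \mathrm{Vol}(\ilat{t}{G})$ and then solve it explicitly; dividing by $n_t = 2^t n_0$ will yield the claimed asymptotic. The key observation is that a locally anti-transitive step interacts with degrees in a very clean way, so the resulting recurrence is linear with inhomogeneity of the form $a\cdot 4^t + b\cdot 2^t$.

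First I would compute, for every vertex of $\ilat{t+1}{G}$, its degree in terms of $\ilat{t}{G}$. Let $x \in V(\ilat{t}{G})$ have degree $d_t(x)$. By definition of $\mathrm{LAT}$, the anti-clone $x^\ast$ is adjacent to exactly the $n_t - 1 - d_t(x)$ non-neighbors of $x$ in $\ilat{t}{G}$. For the original vertex $x$ in $\ilat{t+1}{G}$, its old neighbors are retained, and it additionally picks up an edge to $y^\ast$ precisely for every $y \ne x$ with $y \not\sim x$ in $\ilat{t}{G}$. Hence
\[
\deg_{t+1}(x) \;=\; d_t(x) + \bigl(n_t - 1 - d_t(x)\bigr) \;=\; n_t - 1,
\]
a remarkably uniform value. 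Summing the two contributions over all $x$ gives
\[
V_{t+1} \;=\; \sum_{x}(n_t-1) \;+\; \sum_{x}\bigl(n_t - 1 - d_t(x)\bigr) \;=\; 2n_t(n_t-1) - V_t.
\]

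Next, I would substitute $n_t = 2^t n_0$ to obtain the explicit inhomogeneous recurrence
\[
V_{t+1} + V_t \;=\; 2 n_0^2 \cdot 4^t \;-\; 2 n_0 \cdot 2^t.
\]
Since the homogeneous solution is $C(-1)^t$, I would try the ansatz $V_t = A\cdot 4^t + B\cdot 2^t + C(-1)^t$; matching $4^t$ and $2^t$ coefficients gives $5A = 2n_0^2$ and $3B = -2n_0$, so $A = \tfrac{2n_0^2}{5}$ and $B = -\tfrac{2n_0}{3}$, while $C$ is pinned down by the initial condition $V_0 = \mathrm{Vol}(G)$. Therefore
\[
V_t \;=\; \frac{2n_0^2}{5}\cdot 4^t \;-\; \frac{2n_0}{3}\cdot 2^t \;+\; C(-1)^t.
\]

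Finally, dividing by $n_t = 2^t n_0$ yields the average degree
\[
\frac{V_t}{n_t} \;=\; \frac{2n_0}{5}\cdot 2^t \;-\; \frac{2}{3} \;+\; \frac{C\,(-1)^t}{2^t n_0} \;=\; 2^t\!\left(\frac{2n_0}{5} - o(1)\right),
\]
as desired. I don't anticipate a serious obstacle: the only subtle step is the degree accounting for a \emph{non-cloned} vertex $x$ at time $t+1$, where one must remember that $x$ gains an edge to $y^\ast$ exactly when $y$ was a non-neighbor of $x$ (excluding $x$ itself). Once this count is correct, the cancellation $d_t(x) + (n_t - 1 - d_t(x)) = n_t - 1$ produces a recurrence with no quadratic-in-degree terms, and solving it is standard.
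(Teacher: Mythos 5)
Your proof is correct. The paper itself does not prove this theorem (it is quoted from the ILAT paper), but your derivation is the natural one and is consistent with the paper's own machinery: your volume recurrence $V_{t+1}=2n_t(n_t-1)-V_t$ is exactly twice the edge recurrence $|E(\mathrm{LAT}(G))|=n^2-e-n$ stated later in Section 2, the degree accounting for both the anti-clone $x^\ast$ and the surviving vertex $x$ is right, and solving the linear recurrence and dividing by $n_t=2^tn_0$ gives the claimed $2^t\left(\frac{2n_0}{5}-o(1)\right)$ with the $o(1)$ understood as $t\to\infty$.
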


We now provide an asymptotic formula for the number of edges in an ILM graph.
\begin{theorem}\label{theorem density}
	Let $S = (s_0, s_1, s_2,\dots)$ be a binary sequence with at least one $0$, let $\tau$ be the least index such that $s_{\tau}=0$, and let $t \geq \tau$. Let $\beta=\beta(t)$ be the largest index less than or equal to $t$ such that $s_{\beta} = 0$. For any graph $G$ and $t\geq \tau$,
	\[
	|E({\ilm{t}{S}{G}})|=\Theta\left(2^{t+\beta}\left(\frac{3}2\right)^{t-\beta}\right)=\Theta\left(2^{\beta}\left(\frac{3}2\right)^{t-\beta}n_t\right) .
	\]
\end{theorem}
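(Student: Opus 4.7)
The plan is to set up the one-step recurrences for $e_t$ under each type of step, observe that any LAT step forces the edge count into the $\Theta(n^2)$ range, and then iterate the LT recurrence over the trailing block of $1$'s in $S$ ending at position $t-1$.

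First, I would derive the two one-step recurrences. For an LT step (i.e., $s_t = 1$), each transitive clone $x'$ is adjacent exactly to $N_t[x]$, and clones are pairwise non-adjacent by construction, so $x'$ contributes $\deg_t(x) + 1$ new edges. Summing yields
\[
e_{t+1} \;=\; e_t + \sum_{x \in V(\ilm{t}{S}{G})}(\deg_t(x) + 1) \;=\; 3 e_t + n_t.
\]
For an LAT step ($s_t = 0$), the anti-clone $x^\ast$ contributes $n_t - 1 - \deg_t(x)$ new edges, giving
\[
e_{t+1} \;=\; e_t + n_t(n_t - 1) - 2 e_t \;=\; n_t(n_t - 1) - e_t.
\]
As a sanity check, these recover Theorems~\ref{theorem ILT average degree} and~\ref{theorem ILAT average degree} in the all-ones and all-zeros cases.

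Second, I would pin down the edge count immediately after the most recent LAT step, which occurs at index $\beta$. The trivial inequality $e_\beta \leq \binom{n_\beta}{2}$, valid for any simple graph, plugged into the LAT recurrence, gives
\[
\tfrac{1}{2}\, n_\beta(n_\beta - 1) \;\leq\; e_{\beta+1} \;=\; n_\beta(n_\beta - 1) - e_\beta \;\leq\; n_\beta(n_\beta - 1),
\]
so $e_{\beta+1} = \Theta(n_\beta^2) = \Theta(4^\beta)$. I anticipate this as the main conceptual step, since it is the only place where a non-algebraic graph-theoretic input (namely, that a simple graph has at most $\binom{n}{2}$ edges) is used; without it, an LAT step following a near-complete graph could in principle produce a sparse graph.

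Finally, I would iterate the LT recurrence over the $t - \beta - 1$ consecutive $1$'s in positions $\beta + 1, \ldots, t-1$. Setting $f_k = e_{\beta + 1 + k}$ and noting $n_{\beta+1+k} = 2^k n_{\beta+1}$, the linear recurrence $f_{k+1} = 3 f_k + 2^k n_{\beta+1}$ has, by the method of undetermined coefficients, the closed form
\[
f_k \;=\; (f_0 + n_{\beta+1})\cdot 3^k - n_{\beta+1}\cdot 2^k.
\]
Since $f_0 = e_{\beta+1} = \Theta(4^{\beta+1})$ dominates $n_{\beta+1} = \Theta(2^{\beta+1})$, and the $3^k$ term dominates the $2^k$ term, evaluating at $k = t - \beta - 1$ yields $e_t = \Theta(4^\beta \cdot 3^{t-\beta}) = \Theta(2^{t+\beta}(3/2)^{t-\beta})$, which is the stated asymptotic. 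The equivalent form $\Theta(2^\beta(3/2)^{t-\beta} n_t)$ follows immediately from $n_t = 2^t n_0$, and everything beyond the $\Theta(n^2)$ bound above is routine manipulation of linear recurrences.
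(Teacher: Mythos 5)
Your proof is correct and follows essentially the same strategy as the paper's: show that immediately after the most recent anti-transitive step the graph has $\Theta(n^2)$ edges, and then propagate this through the trailing run of transitive steps. Your lower bound $e_{\beta+1}\geq\tfrac12 n_\beta(n_\beta-1)$, obtained from the edge recurrence together with $e_\beta\leq\binom{n_\beta}{2}$, is numerically the same bound the paper gets by counting degrees of the old vertices, and where the paper invokes Theorem~\ref{theorem ILT average degree} as a black box for the transitive phase, you solve the linear recurrence $f_{k+1}=3f_k+2^kn_{\beta+1}$ in closed form, which is equivalent. Two remarks: your transitive recurrence $e_{t+1}=3e_t+n_t$ is the correct one (the displayed equation~\eqref{equation transitive edge recurrence} has a sign error, as one checks against Theorem~\ref{theorem ILT average degree} or against $\mathrm{LT}(K_2)$); and you index the last anti-transitive step as producing $G_{\beta+1}$, which matches the model's formal definition but not the paper's proof (which reads $s_\beta=0$ as $G_\beta=\mathrm{LAT}(G_{\beta-1})$) --- this off-by-one only changes the implied constants, but under your convention the iteration requires $\beta<t$, so you should either take $\beta$ to be the largest index strictly less than $t$ with $s_\beta=0$ or note explicitly how the case $s_t=0$ is absorbed.
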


\begin{proof}
Let $G_t=\ilm{t}SG$. Suppose in the first case that $s_t=0$ (that is, when $t=\beta$). We then have that for all $v\in V(G_{t-1})$, $\deg_{t}(v)=n_{t-1}-1$, since for all vertices $u,v\in V(G_{t-1})$, $u$ is a non-neighbor of $v$ if and only if the clone of $u$ is adjacent to $v$. Therefore, we have that
	
	\begin{align*}
	2|E(G_t)|&\geq \sum_{v\in V(G_{t-1})}\deg_{t}(v)\\
	&=\frac{n_t}2(n_{t-1}-1)\\
	&=\frac{n_t(n_t-2)}4.
	\end{align*}
We then have that
	\begin{align}
	\label{equation density after anti-transitive step}
	|E(G_t)|=\Theta(n_t^2)=\Theta(2^{2t})=\Theta(2^{t+\beta}),
	\end{align}
and this case is finished.

Suppose in the next and final case that $t> \beta$ so that $s_t=1$. By \eqref{equation density after anti-transitive step}, we have $|E(G_{\beta})|\geq c 2^{2\beta}$ for some constant $c>0$. By Theorem~\ref{theorem ILT average degree}, applied with $G=G_{\beta}$ and $t=t-\beta$, we derive that the average degree of $G_t$ is at least
	\[
	\left(\frac{3}2\right)^{t-\beta}\left(\frac{2c2^{2\beta}}{2^{\beta}|V(G)|}+2\right)-2\geq \frac{2c}{|V(G)|}2^{\beta}\left(\frac{3}2\right)^{t-\beta}.
	\]
	Thus, we have that
	\begin{align*}
	2 |E(G_t)|&\geq 2^t\ \left(\frac{2c}{|V(G)|}2^{\beta}\left(\frac{3}2\right)^{t-\beta}\right)\\
	&=\Omega\left(2^{t+\beta}\left(\frac{3}2\right)^{t-\beta}\right).
	\end{align*}
	
Similarly, by \eqref{equation density after anti-transitive step}, we have that there exists a constant $C$ such that $|E(G_{\beta})|\leq C2^{2\beta}$, so by Theorem~\ref{theorem ILT average degree}, the average degree is at most
	\[
	\left(\frac{3}2\right)^{t-\beta}\left(\frac{2C2^{2\beta}}{2^{\beta}|V(G)|}+2\right)-2\leq \left(\frac{3}2\right)^{t-\beta}\left(\frac{2C2^{\beta}}{|V(G)|}+2\right),
	\]
	so
	\begin{align*}
	2 |E(G_t)|&\leq 2^t \left(\left(\frac{3}2\right)^{t-\beta}\left(\frac{2C 2^{\beta}}{|V(G)|}+2\right)\right)\\
	&=O\left(2^{t+\beta}\left(\frac{3}2\right)^{t-\beta}\right),
	\end{align*}
completing the proof of Theorem~\ref{theorem density}.
\end{proof}

As a corollary to Theorem~\ref{theorem density}, we have that ILM graphs also undergo densification for any input sequence $S$.

\begin{corollary}\label{corollary densification}
	For any binary sequence $S$ and initial graph $G$, we have that
	\[
	\lim_{t\to\infty}\frac{|E(\ilm{t}S{G})|}{|V(\ilm{t}S{G})|}=\infty.
	\]
\end{corollary}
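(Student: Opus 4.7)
The plan is to split the argument according to whether the input sequence $S$ contains any $0$'s, and in each case reduce the claim to one of the average-degree formulas already established. Since $|V(\ilm{t}{S}{G})| = 2^t|V(G)|$ and the ratio $|E(\ilm{t}{S}{G})|/|V(\ilm{t}{S}{G})|$ is exactly half the average degree of $\ilm{t}{S}{G}$, it is enough to show that the average degree tends to infinity in each case.

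First I would dispose of the degenerate case in which $S$ consists entirely of $1$'s. Here $\ilm{t}{S}{G} = \ilt{t}{G}$ for every $t$, so Theorem~\ref{theorem ILT average degree} expresses the average degree as $(3/2)^t(\mathrm{Vol}(G)/n_0 + 2) - 2$. Because $\mathrm{Vol}(G)/n_0 + 2 \geq 2 > 0$, this grows at least like $(3/2)^t$ and therefore diverges.

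Next I would treat the main case, in which $S$ has at least one $0$. Let $\tau$ be the least index with $s_\tau = 0$; for $t \geq \tau$ let $\beta = \beta(t)$ be as in Theorem~\ref{theorem density}. That theorem yields
\[
\frac{|E(\ilm{t}{S}{G})|}{|V(\ilm{t}{S}{G})|} = \Theta\!\left(2^{\beta}\left(\tfrac{3}{2}\right)^{t-\beta}\right).
\]
The key algebraic step is the rewrite $2^{\beta}(3/2)^{t-\beta} = (4/3)^{\beta}(3/2)^t \geq (3/2)^t$, which holds because $\beta \geq 0$. Since $(3/2)^t \to \infty$ as $t \to \infty$, the ratio is bounded below by a quantity tending to infinity, and the corollary follows.

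There is essentially no substantive obstacle here: the statement is a direct consequence of Theorems~\ref{theorem ILT average degree} and~\ref{theorem density}. The only subtlety worth flagging is that $\beta = \beta(t)$ can behave erratically as a function of $t$ -- in particular it need not tend to infinity when $S$ has only finitely many $0$'s -- but the uniform bound $(4/3)^{\beta} \geq 1$ absorbs this dependence into a constant factor, so that the divergence can be read off from the $(3/2)^t$ term alone.
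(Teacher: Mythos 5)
Your proof is correct and follows essentially the same route as the paper, which states the corollary as an immediate consequence of Theorem~\ref{theorem density}: the ratio of edges to vertices is $\Theta\bigl(2^{\beta}(3/2)^{t-\beta}\bigr)=\Theta\bigl((4/3)^{\beta}(3/2)^{t}\bigr)\geq c(3/2)^{t}\to\infty$. Your separate handling of the all-$1$'s sequence via Theorem~\ref{theorem ILT average degree} is a sensible bit of extra care (since Theorem~\ref{theorem density} formally requires at least one $0$), but it does not constitute a different approach.
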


We may also consider sequences with bounded gaps between $0$'s. Results in this case follows immediately from the fact that for such sequences, $\beta=t-O(1)$.

\begin{corollary}
If $S$ is a binary sequence with bounded gaps between 0's, then for any graph $G$, $|E(\ilm{t}S{G})|= \Theta (n_t^2)$, where the implied constant depends on the size of the largest gap in $S$.
\end{corollary}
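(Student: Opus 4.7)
The plan is to derive this directly from Theorem~\ref{theorem density} by controlling the exponent $t-\beta$. Since $S$ has bounded gaps, there exists a constant $k = k(S)$ such that $S$ contains no run of $k$ consecutive $1$'s. In particular, the first $0$ occurs at some index $\tau \le k$, so for any $t \ge \tau$ the largest index $\beta = \beta(t) \le t$ with $s_\beta = 0$ is well-defined and satisfies $t - \beta \le k$.

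Next I would apply Theorem~\ref{theorem density}, which gives
\[
|E(\ilm{t}{S}{G})| = \Theta\!\left(2^{t+\beta}\left(\tfrac{3}{2}\right)^{t-\beta}\right).
\]
Writing $2^{t+\beta} = 2^{2t} \cdot 2^{-(t-\beta)}$, we obtain
\[
2^{t+\beta}\left(\tfrac{3}{2}\right)^{t-\beta} \;=\; 2^{2t} \cdot \left(\tfrac{3}{4}\right)^{t-\beta}.
\]
Since $0 \le t - \beta \le k$, the factor $(3/4)^{t-\beta}$ lies in the interval $[(3/4)^k, 1]$, and is therefore bounded between two positive constants depending only on $k$. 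Recalling that $n_t = 2^t |V(G)|$, we have $2^{2t} = n_t^2 / |V(G)|^2$, and so $2^{t+\beta}(3/2)^{t-\beta} = \Theta(n_t^2)$ with implied constants depending on $k$ (and on $|V(G)|$, which is fixed).

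For the finitely many values $t < \tau \le k$, the quantity $|E(\ilm{t}{S}{G})|/n_t^2$ is bounded above and below by positive constants trivially, so those cases can be absorbed into the implied constants. Combining these observations yields $|E(\ilm{t}{S}{G})| = \Theta(n_t^2)$, as required.

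There is no real obstacle here; the only point to watch is keeping track of the fact that the asymptotic constants in Theorem~\ref{theorem density} already depend on the initial graph $G$, and that the additional dependence on the sequence $S$ enters only through the bound $k$ on the gap length. No new estimates are needed beyond the elementary manipulation of the two geometric factors in Theorem~\ref{theorem density}.
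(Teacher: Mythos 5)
Your proof is correct and follows essentially the same route as the paper, which simply observes that bounded gaps force $\beta = t - O(1)$ and then reads the result off Theorem~\ref{theorem density}; your rewriting $2^{t+\beta}\left(\tfrac{3}{2}\right)^{t-\beta} = 2^{2t}\left(\tfrac{3}{4}\right)^{t-\beta}$ with $0 \le t-\beta \le k$ is exactly the intended computation, spelled out in more detail.
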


As noted in \cite{ilt} and \cite{ilat}, the size of ILT and ILAT models satisfy certain recurrences. More precisely, if $G$ is a graph on $n$ vertices with $e$ edges, then
\begin{equation}\label{equation transitive edge recurrence}
|E(\mathrm{LT}(G))| = 3 e - n,
\end{equation}
and
\begin{equation}\label{equation anti edge recurrence}
|E(\mathrm{LAT}(G))| = n^2 - e - n.
\end{equation}

When given a specific input sequence, we can use these recurrences to say much more about the number of edges. Here, we give a much stronger asymptotic result for alternating sequences.

\begin{theorem}\label{even}
	If we consider the alternating sequence $S=(1,0,1,0,1, \dots)$, then for even $t\ge 0$ we have that
	\[
	|E(\ilm{t}S{K_1})| = (1+o(1))\frac{16}{19} 2^{2t-2}=(1+o(1))\frac{16}{38}\binom{n_t}2.
	\]
\end{theorem}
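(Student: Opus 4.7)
The plan is to derive and solve a closed-form linear recurrence for $e_t$ along the even time steps. Since the input sequence $S = (1,0,1,0,\dots)$ has period $2$, an ILM step from time $t$ to time $t+2$ (for $t$ even) consists of applying LT once followed by LAT once. Composing the two single-step recurrences \eqref{equation transitive edge recurrence} and \eqref{equation anti edge recurrence}, together with $n_t = 2^t$, yields a single recurrence of the shape
\[
e_{t+2} = -3\, e_t + 4 \cdot 4^t + O(2^t)
\]
valid for all even $t \ge 0$. I would carry out this composition explicitly; it is elementary but is the combinatorial heart of the argument.

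Next, I would solve this inhomogeneous linear recurrence by standard methods. The associated homogeneous recurrence $e_{t+2} + 3 e_t = 0$ has characteristic roots $\pm i\sqrt{3}$ of modulus $\sqrt{3}$, so its contribution to any solution is $O(3^{t/2}) = o(4^t)$. A particular solution under the ansatz $e_t = A \cdot 4^t + B \cdot 2^t$ leads, upon equating coefficients of $4^t$, to the equation $16 A = -3 A + 4$ and hence $A = 4/19$; the value of $B$ contributes only a lower-order term. Combining these with the initial condition $e_0 = 0$ coming from $G = K_1$ yields
\[
e_t = (1+o(1)) \cdot \frac{4}{19} \cdot 4^t = (1+o(1)) \cdot \frac{16}{19} \cdot 2^{2t-2}.
\]
The second equality claimed in the theorem then follows from $n_t = 2^t$ and the identity $\binom{n_t}{2} = \tfrac{1}{2}(2^{2t} - 2^t) = (1+o(1)) \cdot 2^{2t-1}$, which converts the $\tfrac{4}{19}$ leading coefficient into $\tfrac{16}{38} = \tfrac{8}{19}$ after division.

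The main obstacle is bookkeeping rather than anything conceptually deep: one must compose the two single-step recurrences carefully and keep accurate control of the $O(2^t)$ correction in the forcing term, including its sign. Once the recurrence is in hand, the leading coefficient $4/19$ is pinned down by the inhomogeneity alone, since the initial condition affects only the $O(3^{t/2})$ homogeneous part, which is strictly dominated by $4^t$ and therefore absorbed into the $(1+o(1))$ factor. Small-case sanity checks — for instance $e_2 = 1$, $e_4 = 49$, and $e_6 = 829$, all comparable to $\tfrac{4}{19} \cdot 4^t$ — provide useful guardrails on the signs and constants before running the general argument.
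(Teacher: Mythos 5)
Your proposal is correct and follows essentially the same route as the paper: compose the LT and LAT edge recurrences over one period to obtain $e_{t+2} = 2^{2t+2} - 3e_t - O(2^t)$ and then extract the leading coefficient $4/19$ (equivalently $\tfrac{16}{19}2^{2t-2}$) from the ratio $-3/16$. The paper does this by unrolling the recurrence into a geometric series (with a sign slip, writing $3^i$ where $(-3)^i$ is meant), while you use the standard homogeneous-plus-particular-solution method; the computation $16A = -3A + 4$ is the same geometric-series summation in disguise.
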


\begin{proof}
Since $t$ is even, we have that $s_t=1$.  By \eqref{equation transitive edge recurrence} and \eqref{equation anti edge recurrence} we have that
	
	\begin{align*}
		e_{t+2} & = {n_{t+1}}^2-e_{t+1}-n_{t+1} \\
		& =  2^{2t+2} - (3e_t + 2^t) - 2^{t+1} \\
		& =  2^{2t+2} - 3e_t - 3 \cdot 2^{t} \\
		& =  \sum_{i = 0}^{t/2} 2^{2t -4i + 2} - 3 \left(\sum_{i = 0}^{t/2} 3^i 2^{t - 2i} \right).
	\end{align*}
	
The first geometric series converges to the dominant term $(16/19) \cdot 2^{2t+2}$ and the result follows.
\end{proof}

\section{Clustering Coefficient}\label{Csec}

Given a graph $G$, the \emph{local clustering coefficient of a vertex} $x\in V(G)$ is defined by
\[
c_G(x) = \frac{\left|E \left(G[N(x)] \right)\right|}{\binom{\deg(x)}2}.
\]
That is, $c_G(x)$ gives a normalized count of the number of edges in the subgraph induced by the neighbor set of $x$ in $G$. The \emph{clustering coefficient} of $G$ is given by
\[
C(G) = \frac{1}{|V(G)|} \sum_{x \in V(G)} c_G(x).
\]

Complex networks often exhibit high clustering, as measured by their clustering coefficients \cite{bbook}. Informally, clustering measures local density. For the ILT model, the clustering coefficient tends to $0$ as $t\to \infty$, although it does so at a slower rate than binomial random graphs with the same average degree \cite{ilt}. More precisely, we have the following.

\begin{theorem}\cite{ilt} \label{theorem ILT clustering}
\[
	\Omega\left(\left(\frac{7}8\right)^tt^{-2}\right)=C(\ilt{t}G)=O\left(\left(\frac{7}8\right)^tt^{2}\right)
\]
\end{theorem}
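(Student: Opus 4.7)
The plan is to derive a one-step recurrence for the local clustering coefficient at each vertex and then iterate it. Fix $x \in V(\ilt{t}G)$ with $d = \deg_t(x)$. The neighborhood of the clone $x'$ in $\ilt{t+1}G$ is exactly $N_t[x]$, while the neighborhood of $x$ becomes $N_t(x) \cup \{x'\} \cup \{u' : u \in N_t(x)\}$. Using that clones added in the same step are pairwise non-adjacent, and that $u'$ is adjacent to $w \in N_t(x)$ iff $w \in N_t[u]$, a direct edge count gives
\[
c_{t+1}(x) = \frac{3(d-1)}{2(2d+1)} c_t(x) + \frac{2}{2d+1}, \qquad c_{t+1}(x') = \frac{d-1}{d+1} c_t(x) + \frac{2}{d+1}.
\]
Summing yields $c_{t+1}(x) + c_{t+1}(x') = f(d)\, c_t(x) + g(d)$, with $f(d) = \tfrac{7}{4} - O(1/d)$ and $g(d) = O(1/d)$. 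Since each vertex of $\ilt{t}G$ produces two vertices in $\ilt{t+1}G$, the effective per-vertex multiplicative factor is $\tfrac{7}{8} - O(1/d)$, which explains the base $(7/8)^t$ appearing in the theorem.

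To iterate, we need degree bounds. Since $\deg_{t+1}(x) = 2\deg_t(x) + 1$ for an original vertex and $\deg_{t+1}(x') = \deg_t(x) + 1$ for its clone, the lineage that is always a clone gains only $1$ in degree per step; hence the minimum degree of $\ilt{t}G$ grows linearly, and every vertex satisfies $\deg_t(x) = \Omega(t)$ after a constant number of initial steps. Let $S_t = \sum_{x \in V(\ilt{t}G)} c_t(x)$; summing the pair recurrence over all $x \in V(\ilt{t}G)$ gives $S_{t+1} = \sum_x \bigl[ f(\deg_t(x)) c_t(x) + g(\deg_t(x)) \bigr]$. Iterating this yields $S_t = (7/4)^t \cdot \Theta(1) \cdot \prod_{i=1}^{t}(1 \pm O(1/i))$, and since $\sum_{i \leq t} 1/i = \Theta(\log t)$, this product is of order $t^{\pm O(1)}$. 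Dividing by $n_t = 2^t n_0$ then produces $C(\ilt{t}G) = \Theta((7/8)^t) \cdot t^{\pm O(1)}$, and detailed accounting of the constants in the $O(1/d)$ slack recovers the stated exponent $\pm 2$.

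The main obstacle is balancing two distinct sources of error. The multiplicative slack $O(1/d)$ in $f(d)$ compounds across $t$ iterations, and because the minimum degree grows only linearly (rather than exponentially, like the average degree), this compounding produces an unavoidable polynomial factor in $t$. Concurrently, the additive term $g(\deg_t(x))$ summed over all $n_t$ vertices can contribute on the order of $n_t/t$ per step, and one must verify that its forward propagation through the remaining iterations remains subdominant to the $(7/4)^t$ main term. A careful accounting of both errors, while also handling the base case of vertices of degree less than $2$ (for which the local clustering coefficient is not genuinely defined), is what pins down the $t^{\pm 2}$ factors appearing on either side of the theorem.
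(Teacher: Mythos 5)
Your one-step recurrences are exact and correct: the edge count behind $c_{t+1}(x)=\frac{3(d-1)}{2(2d+1)}c_t(x)+\frac{2}{2d+1}$ is precisely the computation in Lemma~\ref{lemma transitive clustering change}, and your exact formula for $c_{t+1}(x')$ refines the paper's weaker observation $c_{t+1}(x')\geq c_t(x)$. Bear in mind that the theorem itself is imported from \cite{ilt} and not reproved here; what the present paper does prove with this machinery is the lower-bound half, in improved form: iterating $C(\mathrm{LT}(G))\geq(\tfrac78-\tfrac{3}{8\delta})C(G)$ with $\delta_t\geq t$ and evaluating the product via the Gamma function gives $\Omega((7/8)^t t^{-3/7})$ (Corollary~\ref{corollary transitive clustering coefficient}), which subsumes the stated $t^{-2}$. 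On that side your route and the paper's coincide, modulo disposing of initial graphs with $C(G)=0$ or vertices of degree at most $1$ (a constant number of transitive steps fixes both, since $c_{t+1}(x)\geq\frac{2}{2d+1}>0$).

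The genuine gap is in the upper bound. You control the additive term $\sum_x g(\deg_t(x))$ only through the minimum degree $\delta_t=\Omega(t)$, which yields $\sum_x g(\deg_t(x))=O(n_t/t)=O(2^t/t)$ per step. But $2^t/t$ grows strictly \emph{faster} than the main term $(7/4)^t$, so after unrolling the recursion these additive contributions are not subdominant: they dominate, and the argument as set up only delivers $S_t=O(2^t/t)$, i.e.\ $C(\ilt{t}G)=O(1/t)$, far weaker than $O((7/8)^t t^2)$. (The multiplicative factor is not the culprit: one checks $f(d)<7/4$ for all $d$, so no $t^{+2}$ arises there.) To rescue the plan you need the degree \emph{distribution}, not just the minimum degree: a vertex whose lineage takes the ``original'' branch $j$ times out of $t$ has degree at least $2^{j-1}$, and there are $n_0\binom{t}{j}$ such vertices, so $\sum_x 1/\deg_t(x)=O((3/2)^t)$, which genuinely is dominated by $(7/4)^t$. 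Without some refinement of this kind (or a different mechanism for the upper bound altogether), the assertion that the forward-propagated error ``remains subdominant'' is false as stated, and the right-hand side of the theorem is not recovered by your sketch.
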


In contrast to Theorem~\ref{theorem ILT clustering}, we will see that for any sequence with bounded gaps, the clustering coefficient is bounded away from $0$ for the ILM graphs. We will find it useful to write $c_{t,S,G}(x)$ for $c_{\ilm{t}SG}(x)$, and when $S$ and $G$ are clear from context, we may simply write $c_t(x)$, consistent with earlier defined notation. We establish a bound on the change in the clustering coefficient from performing a transitive step.

\begin{lemma}\label{lemma transitive clustering change}
	If $G$ is a graph with minimum degree $\delta$, then
	\[
	C(\mathrm{LT}(G))\geq \left(\frac{7}8-\frac{3}{8\delta}\right)C(G).
	\]
\end{lemma}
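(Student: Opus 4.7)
The plan is to establish the lemma by a pointwise comparison of clustering coefficients. For each $x \in V(G)$ with transitive clone $x'$ in $\mathrm{LT}(G)$, I will show
\[
c_{\mathrm{LT}(G)}(x) + c_{\mathrm{LT}(G)}(x') \;\geq\; \left(\frac{7}{4} - \frac{3}{4\delta}\right) c_G(x).
\]
Summing over $x \in V(G)$ and dividing by $|V(\mathrm{LT}(G))| = 2|V(G)|$ yields the claim directly from the definition of $C(\cdot)$. Vertices with $\deg_G(x) \leq 1$ have $c_G(x) = 0$ and contribute nothing, and I will in fact prove the stronger pointwise bound with $d := \deg_G(x)$ in place of $\delta$, which suffices since $d \geq \delta$.

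The key calculational step is an edge count in the $\mathrm{LT}(G)$-neighborhoods of $x$ and $x'$. Writing $e_x := |E(G[N_G(x)])|$, one has $\deg_{\mathrm{LT}(G)}(x) = 2d+1$ and $\deg_{\mathrm{LT}(G)}(x') = d+1$. Since $N_{\mathrm{LT}(G)}(x') = N_G[x] \subseteq V(G)$, the induced subgraph on it contains exactly $e_x + d$ edges. The trickier count is for $N_{\mathrm{LT}(G)}(x) = N_G(x) \cup \{y' : y \in N_G[x]\}$: there are $e_x$ edges among original vertices, none among simultaneously introduced clones, and for $y \in N_G(x)$ and $z \in N_G[x]$ the cross-edge $yz'$ is present iff $y \in N_G[z]$. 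A case split on whether $z = x$ or $z \in N_G(x)$ yields $2d + 2e_x$ such cross-edges, for a total of $3e_x + 2d$ edges. Hence
\[
c_{\mathrm{LT}(G)}(x) = \frac{3e_x+2d}{d(2d+1)}, \qquad c_{\mathrm{LT}(G)}(x') = \frac{2(e_x+d)}{d(d+1)}.
\]

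Substituting $e_x = c_G(x) \cdot d(d-1)/2$ and combining over the common denominator $2(2d+1)(d+1)$, the sum simplifies to
\[
c_{\mathrm{LT}(G)}(x) + c_{\mathrm{LT}(G)}(x') = \frac{(d-1)(7d+5)\, c_G(x) + 4(3d+2)}{2(2d+1)(d+1)}.
\]
Clearing denominators in the desired inequality reduces it to $8d(3d+2) \geq (19d^2+8d-3)\, c_G(x)$. Since $c_G(x) \leq 1$, it suffices to verify $24d^2+16d \geq 19d^2+8d-3$, i.e.\ $(5d+3)(d+1) \geq 0$, which is immediate.

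I expect the main obstacle to be the careful bookkeeping of cross-edges in $\mathrm{LT}(G)[N_{\mathrm{LT}(G)}(x)]$, avoiding any double-counting between originals and clones; once those edge counts are in hand, the remainder is purely algebraic, requiring only the trivial bound $c_G(x) \leq 1$ to collapse the linear-in-$c_G(x)$ inequality to a polynomial identity.
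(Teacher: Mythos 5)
Your proposal is correct and follows essentially the same route as the paper: both decompose $C(\mathrm{LT}(G))$ over the pairs $\{x,x'\}$ and rest on the identical edge count $3e_x+2d$ for the neighborhood of $x$ in $\mathrm{LT}(G)$, finishing with $c_G(x)\leq 1$. The only cosmetic difference is that you compute $c_{\mathrm{LT}(G)}(x')$ exactly and run the algebra on the combined sum, whereas the paper just bounds $c_{\mathrm{LT}(G)}(x')\geq c_G(x)$ and treats the two terms separately.
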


\begin{proof}
	Let $x\in V(G)$, and let $x'$ be its transitive clone in $\mathrm{LT}(G)$. Let $c_0(x)$ denote the local clustering coefficient of $x$ in $G$, while $c_1(x)$ will denote the local clustering coefficient of $x$ in $\mathrm{LT}(G)$.

Note that $c_1(x')\geq c_0(x)$ since $N_1(x')=N_0[x]$, and $x$ is a dominating vertex in $G[N_0[x]]$. Recall that $N_t(x)$ and $N_t[x]$ are the open and closed neighborhoods of $x$ in $\ilm{t}SG$, in this case since we are only doing a single transitive step, we will only use these expressions for $t=0$ or $t=1$. We will give a lower bound for $c_1(x)$ in terms of $c_0(x)$.

We now count the edges in $N_1(x)$. In the induced subgraph $\mathrm{LT}(G)[N_0(x)]$, there are
\[
c_0(x)\binom{\deg_0(x)}{2}
\]
many edges. There are
\[
2c_0(x)\binom{\deg_0(x)}{2}+\deg_0(x)
\]
many edges between vertices in $N_0(x)$, and $\left(N_1(x)\setminus N_0(x)\right)\setminus \{x'\}$. Finally, there are $\deg_0(x)$ edges incident with $x'$. This accounts for all the edges in $N_1(x)$. Since $\deg_1(x)=2\deg_0(x)+1$, this gives us that for all $x\in V(G)$,
\begin{align*}
c_1(x)&=\cfrac{3\displaystyle \binom{\deg_0(x)}{2}c_0(x)+2\deg_0(x)}{\displaystyle\binom{2\deg_0(x)+1}2}\\
&=\cfrac{3(\deg_0(x)-1)c_0(x)+4}{2(2\deg_0(x)+1)}\\
&\geq \left(\frac{3}4-\frac{3}{4\deg_0(x)}\right)c_0(x).
\end{align*}
	
Now, recall that for all $v'\in V(\mathrm{LT}(G))\setminus V(G)$, we have $c_1(v')\geq c_0(v)$. Thus,
	\begin{align*}
	C(\mathrm{LT}(G))&=\cfrac{\sum_{v\in V(\mathrm{LT}(G))\cap V(G)}c_1(v)+\sum_{v'\in V(\mathrm{LT}(G))\setminus V(G)}c_1(v')}{2|V(G)|}\\
	&\geq \frac{\sum_{v\in V(G)}\left( \displaystyle\frac{3}4- \displaystyle\frac{3}{4\deg_0(v)}\right)c_0(v)+\sum_{v\in V(G)}c_0(v)}{2|V(G)|}\\
	&\geq \frac{\left(\displaystyle \frac{7}4-\frac{3}{4\delta(G)}\right)\sum_{v\in V(G)}c_0(v)}{2|V(G)|}\\
	&=\left(\frac{7}8-\frac{3}{8\delta(G)}\right)C(G),
	\end{align*}
completing the proof of Lemma~\ref{lemma transitive clustering change}.
\end{proof}

With the preceding lemma, we can improve the lower bound on Theorem~\ref{theorem ILT clustering} to derive a slightly better bound for the clustering coefficient of ILT.

\begin{corollary}\label{corollary transitive clustering coefficient}
	If $G$ is a graph, then
	\[
	C(\ilt{t}G)=\Omega\left(\left(\frac{7}8\right)^t t^{-3/7}\right).
	\]
\end{corollary}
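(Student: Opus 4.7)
My plan is to iterate Lemma~\ref{lemma transitive clustering change} across all $t$ transitive steps while carefully tracking the evolution of the minimum degree. Writing $\delta_i$ for the minimum degree of $\ilt{i}G$ and $C_i$ for $C(\ilt{i}G)$, repeated application of the lemma gives
$$C_t \;\geq\; C_0 \prod_{i=0}^{t-1}\left(\frac{7}{8} - \frac{3}{8\delta_i}\right) \;=\; C_0 \left(\frac{7}{8}\right)^t \prod_{i=0}^{t-1}\left(1 - \frac{3}{7\delta_i}\right),$$
so the task reduces to showing that the residual product is $\Omega(t^{-3/7})$.

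To control $\delta_i$, I will analyze how cloning affects vertex degrees: an arbitrary vertex $v \in V(\ilt{i-1}G)$ of degree $d$ becomes a vertex of degree $2d+1$ in $\ilt{i}G$, while its transitive clone $v'$ has degree $d+1$. Since $d+1 \leq 2d+1$, the new minimum is always realized by the clone of a minimum-degree vertex, giving $\delta_i = \delta_{i-1} + 1$ and hence $\delta_i = \delta_0 + i$ for all $i$. If $G$ happens to have isolated vertices, a constant number of initial steps restores positive minimum degree and nonzero clustering, and this boundary issue is absorbed into the implicit constant.

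Next, I will estimate the product using the elementary inequality $\log(1-x) \geq -x - x^2$ valid on $[0,1/2]$, which applies here since $3/(7\delta_i) \leq 3/7 < 1/2$ whenever $\delta_i \geq 1$. Substituting $\delta_i = \delta_0 + i$ yields
$$\log\prod_{i=0}^{t-1}\left(1 - \frac{3}{7\delta_i}\right) \;\geq\; -\frac{3}{7}\sum_{i=0}^{t-1}\frac{1}{\delta_0+i} \;-\; \frac{9}{49}\sum_{i=0}^{t-1}\frac{1}{(\delta_0+i)^2}.$$
The first sum is a partial harmonic sum equal to $\ln t + O(1)$, while the second converges to a constant. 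Exponentiating gives $\prod_{i=0}^{t-1}(1 - 3/(7\delta_i)) \geq \Omega(t^{-3/7})$, and combining with the earlier display yields $C(\ilt{t}G) = \Omega((7/8)^t t^{-3/7})$.

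The main obstacle is establishing the exact linear growth $\delta_i = \delta_0 + i$, since the exponent on $t$ is sensitive to this rate: any sub-linear growth would worsen the polynomial correction beyond $t^{-3/7}$, whereas an exponential growth (which one might naively expect from duplication) would eliminate the polynomial penalty entirely. The key observation is that transitive clones, rather than original vertices, are the bottleneck for the minimum degree, because a clone inherits only its parent's neighborhood plus the parent itself, while each original vertex roughly doubles its degree each step.
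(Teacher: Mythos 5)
Your proposal is correct and follows essentially the same route as the paper: iterate Lemma~\ref{lemma transitive clustering change} with the minimum degree growing as $\delta_i=\delta_0+i$, and show the residual product $\prod_i\left(1-\frac{3}{7\delta_i}\right)$ is $\Theta\left(t^{-3/7}\right)$. The only difference is cosmetic: the paper evaluates the product in closed form via the Gamma function, $\prod_{i=1}^{t}\left(i-\frac{3}{7}\right)=\Gamma(t+4/7)/\Gamma(4/7)$, whereas you take logarithms and use the harmonic sum; both yield the same asymptotics.
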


\begin{proof}
	Let $\delta_t=\delta(\ilt{t}G)$. Note that $\delta_t=\delta_{t-1}+1$, so $\delta_t\geq t$. We may iteratively apply Lemma~\ref{lemma transitive clustering change} to obtain that
	\[
	C({\ilt{t}G})\geq C(G)\prod_{i=1}^t\left(\frac{7}8-\frac{3}{8\delta_i}\right)\geq C(G)\prod_{i=1}^t\left(\frac{7}8-\frac{3}{8i}\right).
	\]
Observe that
	\begin{align*}
	\prod_{i=1}^t\left(\frac{7}8-\frac{3}{8i}\right)&=\left(\frac{7}8\right)^t \frac{1}{t!} \prod_{i=1}^t\left(i-\frac{3}7\right)\\
	&=\left(\frac{7}8\right)^t \frac{1}{t!} \prod_{i=1}^{t}\left(t+\frac{4}7-i\right)\\
	&=\left(\frac{7}8\right)^t \frac{\Gamma(t+4/7)}{t!\Gamma(4/7)}\\
	&=\left(\frac{7}8\right)^t \frac{\Gamma(t+4/7)}{t\Gamma(t)  \Gamma(4/7)}=\Theta\left(\left(\frac{7}8\right)^tt^{-3/7}\right),
	\end{align*}
	where $\Gamma$ is the well-known Gamma function. Thus,
	\[
	C(\ilt{t}G)=\Omega\left(\left(\frac{7}8\right)^tt^{-3/7}\right),
	\]
	completing the proof of Corollary~\ref{corollary transitive clustering coefficient}.
\end{proof}

Towards bounding the clustering coefficient for sequences with bounded gaps, we show that the clustering coefficient is bounded away from $0$ whenever we perform an anti-transitive step.

\begin{lemma}\label{lemma anti-transitive clustering bound}
Let $G$ be a graph and $S=(s_0,s_1,s_2,\dots)$ be a binary sequence with bounded gaps between 0's. Let $k$ be the constant such that there is no gap of length $k$, and let $\tau$ be the third index such that $s_{\tau}=0$. For all $t\geq \tau$, if $s_t=0$, then
\[
C(\ilm{t}SG)\geq (1+o(1))\frac{1}{2^{2k+4}}.
\]
\end{lemma}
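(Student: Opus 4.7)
The strategy is to bound $C(G_t)$ from below by retaining only the contribution of the anti-clones produced at step $t$. For each $v\in V(G_{t-1})$, its anti-clone $v^*\in V(G_t)$ satisfies $N_t(v^*)=V(G_{t-1})\setminus N_{t-1}[v]$, so
\[
c_t(v^*)=\frac{|E(G_{t-1}[V(G_{t-1})\setminus N_{t-1}[v]])|}{\binom{n_{t-1}-1-\deg_{t-1}(v)}{2}}
\]
is exactly the edge density of $G_{t-1}$ restricted to the non-neighborhood $\overline{N}_{t-1}[v]:=V(G_{t-1})\setminus N_{t-1}[v]$. Since $C(G_t)\ge\frac{1}{n_t}\sum_{v\in V(G_{t-1})}c_t(v^*)$, it suffices to control the average of these non-neighborhood edge densities.

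First I would establish two structural facts about $G_{t-1}$. Let $\beta=\beta(t-1)$ be the largest index less than $t$ with $s_\beta=0$; the bounded-gaps hypothesis yields $t-\beta\le k$. Induction using $\Delta(\mathrm{LT}(H))=2\Delta(H)+1$ together with $|V(\mathrm{LT}(H))|=2|V(H)|$, starting from $\Delta(G_\beta)=n_\beta/2-1$ (since in $\mathrm{LAT}(H)$ every original vertex has degree $|V(H)|-1$), yields $\Delta(G_{t-1})=n_{t-1}/2-1$, so every closed neighborhood in $G_{t-1}$ has size at most $n_{t-1}/2$. Using Theorem~\ref{theorem density} together with $t-1-\beta\le k-1$, I would also obtain the asymptotic lower bound $|E(G_{t-1})|\ge(1+o(1))\,n_{t-1}^2/2^{k+2}$.

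Next I would use the double-counting identity
\[
\sum_{v\in V(G_{t-1})}|E(G_{t-1}[\overline{N}_{t-1}[v]])|=\sum_{uw\in E(G_{t-1})}|\overline{N}_{t-1}[u]\cap\overline{N}_{t-1}[w]|,
\]
combined with the pointwise inequality $|\overline{N}_{t-1}[u]\cap\overline{N}_{t-1}[w]|\ge n_{t-1}-\deg_{t-1}(u)-\deg_{t-1}(w)$ (valid for edges $uw$, since $|N[u]\cap N[w]|\ge 2$). The goal is to push these estimates to $\sum_{uw\in E(G_{t-1})}|\overline{N}_{t-1}[u]\cap\overline{N}_{t-1}[w]|\ge(1+o(1))\,n_{t-1}^3/2^{2k+3}$. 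The hard part will be this quantitative averaging: although the maximum degree $n_{t-1}/2-1$ is attained in $G_{t-1}$, the set of max-degree vertices---the descendants of $V(G_{\beta-1})$---has size only $n_{t-1}/2^{t-\beta}$, which limits how many edges have both endpoints saturating the degree cap and lets us recover a usable linear-in-$n_{t-1}$ contribution from the remaining edges via a careful split of the edge sum by degree.

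Finally, combining all bounds with the trivial inequality $\binom{n_{t-1}-1-\deg_{t-1}(v)}{2}\le n_{t-1}^2/2$ and the identity $n_t=2n_{t-1}$ yields
\[
C(G_t)\ge\frac{1}{n_t}\cdot\frac{2}{n_{t-1}^2}\sum_{v\in V(G_{t-1})}|E(G_{t-1}[\overline{N}_{t-1}[v]])|\ge(1+o(1))\,\frac{1}{2^{2k+4}},
\]
as required. The factor $2^{2k+4}$ arises from tracking the $(3/4)$-factor edge-density loss per transitive step, together with the $n_{t-1}^2/2$ normalization, across the at most $k$ steps separating $\beta$ from $t$.
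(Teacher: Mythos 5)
Your route is genuinely different from the paper's: the paper lower-bounds $c_t(v)$ for the $n_{\beta_1-1}\ge n_t/2^{2k+1}$ \emph{oldest} vertices $v$ by exhibiting, inside $N_t(v)$, a complete bipartite graph between a set $X_{t-1}\subseteq N_{t-1}(v)$ and the time-$t$ anti-clones of a set $Y_{t-1}$ disjoint from $N_{t-1}(v)$, each of size $n_{t-1}/4$ and with no edges between them (these sets are built by tracking, through the last three anti-transitive steps, exactly which half of each generation $v$ sees). You instead average the local clustering of the \emph{new} anti-clones, which reduces the lemma to a lower bound on the average edge density of non-neighborhoods in $G_{t-1}$. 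Your reduction, the double-counting identity, and the pointwise inequality $|\overline{N}_{t-1}[u]\cap\overline{N}_{t-1}[w]|\ge n_{t-1}-\deg_{t-1}(u)-\deg_{t-1}(w)$ for edges $uw$ are all correct, and the approach can in principle be pushed through to the same constant.

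However, there is a genuine gap at the step you yourself flag as ``the hard part,'' and the mechanism you propose for closing it is the wrong one. Writing $n=n_{t-1}$, your target reduces to
\[
\sum_{uw\in E(G_{t-1})}\bigl(n-\deg(u)-\deg(w)\bigr)=\sum_{v}\deg(v)\Bigl(\frac{n}{2}-\deg(v)\Bigr)=\Omega(n^{3}),
\]
and the facts you establish --- $\Delta(G_{t-1})=n/2-1$, $|E(G_{t-1})|=\Omega(n^{2})$, and the max-degree set having size $n/2^{t-\beta}$ --- do not imply this. With only $\Delta\le n/2-1$ the sum is merely $\ge 2|E(G_{t-1})|=O(n^{2})$, one full factor of $n$ short; and bounding the number of vertices that \emph{exactly} attain the maximum degree does not help, since vertices of degree $n/2-O(1)$ that are not literally at the cap still contribute only $O(n)$ each. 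What you actually need is a set of $\Omega(n)$ vertices whose degrees are simultaneously $\Omega(n)$ and at most $(1/2-\epsilon)n$ for a constant $\epsilon$. Such a set exists --- the time-$\beta$ anti-clones of the vertices of $V(G_{\beta_1-1})$ (where $\beta_1$ is the second-to-last zero before $t$) have degree $n_{\beta-1}/2$ at birth and hence degree $(1+o(1))n/4$ at time $t-1$, and there are at least $n/2^{2k}$ of them, yielding $\sum_v\deg(v)(n/2-\deg(v))\ge(1+o(1))n^{3}/2^{2k+4}$ and then $C(G_t)\ge(1+o(1))/2^{2k+4}$ --- but identifying this set and propagating its degrees through the intervening transitive steps is precisely the generation-by-generation bookkeeping that constitutes the substance of the paper's proof, and it is absent from your argument. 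Until you supply it, the inequality $\sum_{uw\in E}|\overline{N}_{t-1}[u]\cap\overline{N}_{t-1}[w]|\ge(1+o(1))n^{3}/2^{2k+3}$ is an unproved assertion.
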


\begin{proof}
Let $G_t=\ilm{t}SG$, $t\geq \tau$, and let $\beta_1=\beta_1(t)$, and $\beta_2=\beta_2(t)$ be the two largest indices such that $\beta_1<\beta_2\leq t$, and $s_{\beta_1}=s_{\beta_2}=0$. We then have that $\beta_2\geq t-k$ and $\beta_1\geq \beta_2-k\geq t-2k$.

If $v\in V(G_{\beta_1-1})$, then $\deg_{\beta_1}(v)=\frac{n_{\beta_1}}{2}-1$. If $s_{\beta_1+1}=1$, then we have that
\[
\deg_{\beta_1+1}(v)=2\deg_{\beta_1}(v)+1=\frac{n_{\beta_1+1}}{2}-1.
\]
Since all entries between $\beta_1$ an $\beta_2$ are 1, we have, inductively that
\[
\deg_{\beta_2-1}(v)=\frac{n_{\beta_2-1}}{2}-1.
\]
Thus, $v$ is adjacent to exactly half of the vertices in $V(G_{\beta_2})\setminus V(G_{\beta_2-1})$.

Let $X_{\beta_2}=N_{\beta_2}(v)\cap (V(G_{\beta_2})\setminus V(G_{\beta_2-1}))$ and $Y_{\beta_2}=(V(G_{\beta_2})\setminus V(G_{\beta_2-1}))\setminus N_{\beta_2}(v)$. Therefore, we find that $|X_{\beta_2}|=|Y_{\beta_2}|=n_{\beta_2}/4$.
	
If $s_{\beta_2+1}=1$, then we define $X_{\beta_2+1}$ to be the vertices in $X_{\beta_2}$ and the clones of vertices in $X_{\beta_2}$ born at time $\beta_2+1$. Similarly, let $Y_{\beta_2+1}$ be the vertices in $Y_{\beta_2}$ along the clones of $Y_{\beta_2}$ born at time $\beta_2+1$. Note that there are no edges between $X_{\beta_2+1}$ and $Y_{\beta_2+1}$ and $|X_{\beta_2+1}|=|Y_{\beta_2+1}|=n_{\beta_2+1}/4$. In addition, $X_{\beta_2+1}\subseteq N_{\beta_2+1}(v)$, while $Y_{\beta_2+1}\cap N_{\beta_2+1}(v)=\emptyset$.

Inductively, we can continue in this fashion until we have sets $X_{t-1}$ and $Y_{t-1}$ of size $n_{t-1}/4$ with no edges between them and $X_{t-1}\in N_{t-1}(v)$, while $Y_{t-1}\cap N_{t-1}(v)=\emptyset$. After an anti-transitive step, the vertices in $X_{t-1}$ and $v$ will be adjacent to every clone of a vertex in $Y_{t-1}$. We then have that $N_t(v)$ contains at least $|X_{t-1}|\cdot |Y_{t-1}|=n_t^2/64$ edges. Since $\deg_t(v)=\frac{n_t}{2}-1$, we have that
	\[
	c_t(v)\geq \frac{n_t^2/64}{\displaystyle \binom{n_t/2-1}2}=(1+o(1))\frac{1}8.
	\]
Note this is holds for all vertices $v\in V(G_{\beta_1-1})$. There are $n_{\beta_1-1}\geq n_{t-2k-1}=\frac{n_t}{2^{2k+1}}$ such vertices, and hence,
	\[
	C(G_t)\geq \frac{(1+o(1)){\displaystyle \frac{1}8 \cdot \displaystyle \frac{n_t}{2^{2k+1}}}}{n_t}=(1+o(1))\frac{1}{2^{2k+4}},
	\]
completing the proof of Lemma~\ref{lemma anti-transitive clustering bound}
\end{proof}

We now have the tools to prove the main result of this section.

\begin{theorem}\label{theorem clustering coefficient}
	Let $G$ be a graph, $S=(s_0,s_1,s_2,\dots)$ be a binary sequence with bounded gaps between zeroes, and $k$ be an absolute constant such that there is no gap of length $k$. If $\tau$ is the third index such that $s_{\tau}=0$, then for all $t\geq \tau$, the clustering coefficient
	\[
	C(\ilm{t}S{G})\geq (1+o(1))\left(\frac{7}8\right)^k\frac{1}{4^{k+2}}.
	\]
\end{theorem}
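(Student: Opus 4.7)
The plan is to combine the uniform lower bound on $C$ at anti-transitive stages from Lemma~\ref{lemma anti-transitive clustering bound} with the iterated decay bound from Lemma~\ref{lemma transitive clustering change} across the subsequent transitive stages. Let $\beta = \beta(t)$ be the largest index with $\beta \leq t$ and $s_{\beta} = 0$. The bounded-gaps hypothesis forbids runs of $k$ consecutive $1$'s, so $t - \beta \leq k - 1$; and since $t \geq \tau$ and $\tau$ is the third zero index, we also have $\beta \geq \tau$. Hence Lemma~\ref{lemma anti-transitive clustering bound} applies at time $\beta$, giving
\[
C(\ilm{\beta}{S}{G}) \geq (1 + o(1)) \frac{1}{4^{k+2}}.
\]

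From $\ilm{\beta}{S}{G}$ to $\ilm{t}{S}{G}$ exactly $t - \beta$ transitive steps are applied (by maximality of $\beta$, all intermediate indices equal $1$). Iterating Lemma~\ref{lemma transitive clustering change} yields
\[
C(\ilm{t}{S}{G}) \geq C(\ilm{\beta}{S}{G}) \prod_{i=0}^{t-\beta-1} \left(\frac{7}{8} - \frac{3}{8\,\delta(\ilm{\beta+i}{S}{G})}\right).
\]

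The main technical step is to verify that $\delta(\ilm{\beta+i}{S}{G}) \to \infty$ as $t \to \infty$ (uniformly in $i$), so that each factor in the product tends to $7/8$. For this I would track the pair $(\delta, \Delta)$ through each operation: a transitive step sends $(\delta, \Delta) \mapsto (\delta + 1,\, 2\Delta + 1)$, while an anti-transitive step on an $n$-vertex graph sends $(\delta, \Delta) \mapsto (n - 1 - \Delta,\, n - 1)$. Because $\beta \geq \tau$, there is a prior anti-transitive step at some time $\alpha < \beta$ with only transitive steps in between, and a short induction on the at most $k - 1$ intervening transitive steps gives $\Delta(\ilm{\beta-1}{S}{G}) = n_{\beta-1}/2 - 1$. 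Hence
\[
\delta(\ilm{\beta}{S}{G}) = n_{\beta-1} - 1 - \Delta(\ilm{\beta-1}{S}{G}) = \frac{n_{\beta-1}}{2}.
\]
Since transitive steps raise the minimum degree by exactly one, $\delta(\ilm{\beta+i}{S}{G}) \geq n_{\beta-1}/2 + i$, which tends to infinity because $\beta \geq t - k + 1 \to \infty$.

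Consequently, each of the at most $k - 1$ factors in the product equals $(7/8)(1 - o(1))$, so the product is bounded below by $(7/8)^{t - \beta}(1 - o(1)) \geq (7/8)^k (1 - o(1))$. Combining,
\[
C(\ilm{t}{S}{G}) \geq (1 + o(1)) \frac{(7/8)^k}{4^{k+2}},
\]
which is the desired bound. The only genuinely delicate step is the minimum-degree analysis, which requires tracking $(\delta, \Delta)$ across the mixed transitive and anti-transitive dynamics; the rest is direct iteration of the two available lemmas together with the bounded-gaps hypothesis.
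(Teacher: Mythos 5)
Your proposal is correct and follows essentially the same route as the paper: apply Lemma~\ref{lemma anti-transitive clustering bound} at the last zero index $\beta\geq t-k$, then iterate Lemma~\ref{lemma transitive clustering change} over the at most $k$ subsequent transitive steps, using the fact that $\Delta(\ilm{\beta-1}S{G})=n_{\beta-1}/2-1$ forces $\delta(\ilm{\beta}S{G})=n_{\beta-1}/2\to\infty$ so each factor is $(7/8)(1-o(1))$. Your explicit $(\delta,\Delta)$ bookkeeping is just a more detailed rendering of the paper's one-line minimum-degree observation, so there is nothing substantively different to compare.
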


\begin{proof}
Let $G_t=\ilm{t}SG$. Let $\beta\leq t$ denote the largest index such that $s_{\beta}=0$, and so $\beta\geq t-k$. Note that since at least one anti-transitive step happens before time $\beta$, we have that the maximum degree $\Delta(G_{\beta-1})=n_{\beta-1}/2-1$. Thus, after the anti-transitive step at time $\beta$, we have minimum degree $\delta(G_{\beta})\geq n_{\beta-1}/2=n_{\beta}/4\geq n_t/2^{k+2}$. Note that the minimum degree does not decrease after a transitive step, so $\delta(G_{t'})\geq n_t/2^{k+2}$ for all $t'\geq \beta$.
	
By Lemma~\ref{lemma anti-transitive clustering bound}, we have that $C(G_{\beta})\geq (1+o(1))/4^{k+2}$. By repeated application of Lemma~\ref{lemma transitive clustering change}, we have that
	\begin{align*}
	C(G_t)&\geq \left(\frac{7}8-\frac{3\cdot 2^{k+2}}{8n_t}\right)^{t-\beta} C(G_{\beta})\\
	&\geq (1+o(1))\left(\frac{7}8\right)^k\frac{1}{4^{k+2}},
	\end{align*}
completing the proof of Theorem~\ref{theorem clustering coefficient}.
\end{proof}

\section{Graph Parameters}\label{GPsec}

In this section, we will explore a number of classical graph parameters for ILM graphs, including the chromatic number, domination number, and diameter. For a more detailed discussion of graph parameters, the reader is pointed to \cite{west}.

A shortest path between two vertices is called a $\emph{geodesic}$. The diameter of a graph is the length of the longest geodesic; that is, it is the furthest distance between any two vertices. The radius of a graph is the largest integer $r$ such that every vertex has at least one vertex at distance $r$ from it.

\begin{lemma}\label{lemma chi+1}
If $G$ is a graph, then $\chi(\mathrm{LT}(G))=\chi(G)+1$. If the radius of $G$ is at least $3$, then $\chi(\mathrm{LAT}(G))=\chi(G)+1$.
\end{lemma}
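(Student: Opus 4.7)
The plan is to prove both equalities by a common two-step template: a constructive upper bound, and a lower bound obtained by a color-collapsing argument. The upper bound $\chi(\mathrm{LT}(G)),\chi(\mathrm{LAT}(G))\le \chi(G)+1$ is immediate: take any proper $\chi(G)$-coloring of $G$ and assign the fresh color $\chi(G)+1$ to every clone (resp.~anti-clone). Clones (anti-clones) born in the same step are pairwise non-adjacent, and each is adjacent only to a subset of $V(G)$ whose colors lie in $\{1,\dots,\chi(G)\}$, so the extension is proper.

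For the matching lower bound in the transitive case, I would assume toward a contradiction a proper $k$-coloring $c$ of $\mathrm{LT}(G)$ with $k=\chi(G)$; its restriction to $V(G)$ is then a proper $k$-coloring of $G$, using all $k$ colors by minimality of $\chi(G)$. For each $x\in V(G)$ with $c(x)=k$, the clone $x'$ is adjacent to every vertex of $N_G[x]$, so $c(x')\in\{1,\dots,k\}\setminus c(N_G[x])\subseteq\{1,\dots,k-1\}\setminus c(N_G(x))$. Defining $c''(v)=c(v)$ when $c(v)\ne k$ and $c''(v)=c(v')$ when $c(v)=k$, a short case analysis on an edge $vw\in E(G)$ according to which of $c(v),c(w)$ equal $k$ (the both-equal case being excluded by properness of $c|_{V(G)}$) shows that $c''$ is a proper $(k-1)$-coloring of $G$, contradicting $\chi(G)=k$.

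The anti-transitive case follows the same template, but because $x^*$ is adjacent to $V(G)\setminus N_G[x]$ rather than $N_G[x]$, the color $c(x^*)$ gives no direct information about $c(N_G(x))$ and cannot replace $c(x)$ on its own; this is the main obstacle and the reason the radius hypothesis enters. For each $x$ with $c(x)=k$, radius at least $3$ supplies a vertex $y=y_x$ with $d_G(x,y)\ge 3$, so that $N_G[x]\cap N_G[y]=\emptyset$ and in particular $N_G(x)\subseteq V(G)\setminus N_G[y]$. Since $y^*$ is adjacent to all of $V(G)\setminus N_G[y]$, the color $c(y^*)$ avoids $c(N_G(x))$; moreover $c(y^*)\ne k$, because otherwise no vertex of $V(G)\setminus N_G[y]$ (a set containing $x$) could be colored $k$. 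Thus $c(y^*)\in\{1,\dots,k-1\}\setminus c(N_G(x))$, and setting $c''(v)=c(y_v^*)$ whenever $c(v)=k$ (and $c''(v)=c(v)$ otherwise) yields a proper $(k-1)$-coloring of $G$, the desired contradiction. Without the radius-$3$ assumption, every candidate $y$ could have $N_G[y]$ meeting $N_G(x)$, so the borrowed color $c(y^*)$ might conflict with some color in $N_G(x)$ and the substitution would fail; this is precisely why the hypothesis is needed.
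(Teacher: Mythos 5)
Your proof is correct and follows essentially the same route as the paper: both arguments rest on the observation that the clone $x'$ (respectively, the anti-clone $y^*$ of a vertex $y$ at distance at least $3$ from $x$) is adjacent to all of $N_G[x]$, and derive a contradiction from a hypothetical $\chi(G)$-coloring of the new graph. The only cosmetic difference is that the paper invokes the well-known fact that some closed neighborhood in an optimal coloring meets every color class, whereas you prove that fact inline by explicitly recoloring the top color class with the clones' colors to produce a $(k-1)$-coloring of $G$.
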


\begin{proof}
Observe first that after a transitive step or an anti-transitive step, the vertex set of resulting graph can be partitioned into a set which induces $G$ and an independent set. Thus, the chromatic number goes up by at most one.

Let $F$ and $H$ be graphs such that $F$ is an induced subgraph of $H$. We claim that if for every vertex $v\in V(F)$, there exists a vertex $u\in V(H)$ with $N[v]\cap V(F)\subseteq N(u)$, then $\chi(H)>\chi(G)$.

Now, assume to the contrary that $\chi(\mathrm{LT}(G))=\chi(G).$ This induces a proper coloring on $G$. It is well-known and straightforward to see that if $G$ is properly colored with $\chi(G)$ colors, then there must be a vertex $v\in V(G)$ such that $N[v]\cap V(G)$ has a vertex of every color in it. Hence, there is no possible color for the clone $v'$, a contradiction.

Similarly, in $\mathrm{LAT}(G)$, for every vertex $v\in V(G)$, if $G$ has radius at least $3$, there exists a vertex $u\in V(G)$ at distance $3$ from $v$, so the anti-transitive clone $u^*\in V(\mathrm{LAT}(G))$ has the property that $N[v]\cap V(G)\subseteq N(v^*)$. Thus, by the same argument in the preceding paragraph, $\chi(\mathrm{LAT}(G))=\chi(G)+1$.
	\end{proof}

We prove that after one anti-transitive step, the radius is at least $3$, and transitive steps preserve the radius.

\begin{lemma}\label{lemma radius 3}
If $G$ is any graph, then the radius of $\mathrm{LAT}(G)$ is at least three. Further, if $G$ is a graph with radius at least $3$, then the radius of $\mathrm{LT}(G)$ is at least 3.
\end{lemma}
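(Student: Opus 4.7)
The plan is to handle the two assertions separately by exhibiting, for each vertex of the resulting graph, a specific partner at distance at least three.

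For the first assertion, I will prove the key fact that in $\mathrm{LAT}(G)$ any vertex $w \in V(G)$ and its anti-clone $w^*$ satisfy $d(w, w^*) \geq 3$. This immediately suffices, since an original vertex $v \in V(G)$ uses $v^*$ as its witness, while an anti-clone $v = w^*$ uses $w$. To establish the key fact, observe first that $w \not\sim w^*$ directly from the definition, since $w \in N_G[w]$. It then remains to rule out common neighbors of $w$ and $w^*$ in $\mathrm{LAT}(G)$ by a short case split: a common neighbor in $V(G)$ would simultaneously be a neighbor of $w$ in $G$ and a non-neighbor of $w$ in $G$, and a common neighbor of the form $y^*$ cannot be adjacent to $w^*$ because anti-clones introduced at the same step are pairwise non-adjacent.

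For the second assertion, I will use the radius hypothesis on $G$ to locate, for each $v \in V(\mathrm{LT}(G))$, a witness of distance at least three. Set $x = v$ if $v \in V(G)$, and let $x$ be the vertex cloned to produce $v = x'$ otherwise. The radius hypothesis provides $u \in V(G)$ with $d_G(u, x) \geq 3$, and I will argue $d_{\mathrm{LT}(G)}(u, v) \geq 3$. When $v \in V(G)$, non-adjacency of $u$ and $v$ is inherited from $G$; a common neighbor of $u$ and $v$ lying in $V(G)$ would give $d_G(u,v) \leq 2$, while a common neighbor of the form $y'$ would force $u, v \in N_G[y]$ and hence again $d_G(u,v) \leq 2$. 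When $v = x'$, the identity $N_{\mathrm{LT}(G)}(x') = N_G[x] \subseteq V(G)$ shows that any common neighbor of $u$ and $x'$ must lie in $V(G)$ and be adjacent to $x$ (or equal to $x$) in $G$, forcing $d_G(u, x) \leq 2$, a contradiction.

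The main obstacle is simply the bookkeeping of which common-neighbor configurations can arise after a single clone or anti-clone step; the nontrivial ingredient throughout is the observation, recorded just after the model definitions, that clones (respectively anti-clones) introduced in a single step are pairwise non-adjacent. With that fact and the defining properties of $N_{\mathrm{LT}(G)}$ and $N_{\mathrm{LAT}(G)}$ in hand, every distance inequality reduces to a direct case analysis requiring no computation.
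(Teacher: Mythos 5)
Your proof is correct and follows essentially the same approach as the paper: for $\mathrm{LAT}(G)$ the witness pair is a vertex and its anti-clone (the paper phrases your case split as $N[x]\cap N[x^*]=\emptyset$), and for $\mathrm{LT}(G)$ both arguments reduce a hypothetical path of length at most two back to a path of length at most two in $G$. The only cosmetic difference is that the paper proves full distance preservation between original vertices under $\mathrm{LT}$, whereas you rule out distance at most $2$ directly, which is all the lemma needs.
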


\begin{proof}
To see the first part of the lemma, note that if $x\in V(G)$, and $x^*\in V(\mathrm{LAT}(G))\setminus V(G)$ is the anti-transitive clone of $x$, then $N[x]\cap N[x^*]=\emptyset$, so $d(x,x^*)\geq 3$.
	
For the second part of the lemma, first, note that if $u,v\in V(G)$, then the distance between $u$ and $v$ in $G$ is the same as the distance in $\mathrm{LT}(G)$. Indeed, since for any vertex $w\in V(G)$, and its transitive clone $w'$, we have $N(w')=N[w]\cap V(G)$, no $u$-$v$-geodesic will contain both $w$ and $w'$, and furthermore, for any $u$-$v$-geodesic containing $w'$, we can replace $w'$ with $w$, giving us another $u$-$v$-geodesic. We then have that there is a $u$-$v$-geodesic using only vertices in $V(G)$, so the distance between $u$ and $v$ is the same in both $G$ and $\mathrm{LT}(G)$, so every vertex in $G$ has a vertex at distance $3$. Note that if $d(u,v)\geq 3$ for $u,v\in V(G)$, then for the transitive clone, $v'$ of $v$, we also have $d(u,v')=d(u,v)\geq 3$, proving the proposition.
\end{proof}

We are now ready to prove Theorem~\ref{thm:chrom}.

\begin{theorem}\label{thm:chrom}	
If $G$ is a graph and $S = (s_0, s_1, s_2, \ldots)$ is a binary sequence, then the chromatic number satisfies the following:
	
	\[
	\chi(G) + t -1 \leq \chi({\ilm{t}SG}) \leq {\chi}(G) + t,
	\]
for all $t \geq 0$.
\end{theorem}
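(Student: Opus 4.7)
The plan is to prove the two inequalities separately, with the upper bound being an easy induction and the lower bound requiring a case split on whether or not an anti-transitive step has yet been applied.

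For the upper bound, I would argue by induction on $t$. The base case $t=0$ is trivial. For the inductive step, observe (as was noted in the first paragraph of the proof of Lemma~\ref{lemma chi+1}) that after either a transitive or anti-transitive step, the new vertex set splits as $V(G) \cup V'$ where $V'$ is independent. Thus a proper coloring of $G$ together with one fresh color for $V'$ is a proper coloring of $\mathrm{LT}(G)$ or $\mathrm{LAT}(G)$, giving $\chi(\ilm{t+1}SG) \leq \chi(\ilm{t}SG) + 1 \leq \chi(G) + t + 1$.

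For the lower bound, let $\tau$ be the least index with $s_\tau = 0$ (if such an index exists with $\tau \leq t-1$); otherwise all of $s_0, \ldots, s_{t-1}$ are $1$ and $\ilm{t}SG = \ilt{t}G$, in which case $t$ applications of Lemma~\ref{lemma chi+1} directly yield $\chi(\ilm{t}SG) = \chi(G) + t \geq \chi(G) + t - 1$. So assume $\tau \leq t-1$. Applying Lemma~\ref{lemma chi+1} to the first $\tau$ transitive steps gives $\chi(\ilm{\tau}SG) = \chi(G) + \tau$. The step at time $\tau+1$ is anti-transitive and we cannot guarantee that it raises the chromatic number (we may lose one here), but since $\ilm{\tau}SG$ is an induced subgraph of $\ilm{\tau+1}SG$, we still have $\chi(\ilm{\tau+1}SG) \geq \chi(G) + \tau$. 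The key observation is that from time $\tau+1$ onward, the radius of $\ilm{j}SG$ is at least $3$: by Lemma~\ref{lemma radius 3}, $\ilm{\tau+1}SG = \mathrm{LAT}(\ilm{\tau}SG)$ has radius at least $3$, and for $j \geq \tau+1$, whether the next step is transitive or anti-transitive, Lemma~\ref{lemma radius 3} preserves the ``radius $\geq 3$'' property in either case. Thus the hypothesis of the second part of Lemma~\ref{lemma chi+1} is met at every subsequent time step, so each of the $t - \tau - 1$ steps from time $\tau+1$ to time $t$ increases the chromatic number by exactly $1$, giving
\[
\chi(\ilm{t}SG) \geq \chi(G) + \tau + (t - \tau - 1) = \chi(G) + t - 1.
\]

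The main conceptual obstacle is handling the single anti-transitive step that produces the ``$-1$'' in the lower bound: we cannot apply the second clause of Lemma~\ref{lemma chi+1} at time $\tau+1$ because the radius of $\ilm{\tau}SG$ may be less than $3$ (indeed, a sequence of only transitive steps starting from a small graph need not produce radius $3$). The trick is to absorb this one potential loss explicitly via the induced-subgraph inequality $\chi(\ilm{\tau+1}SG) \geq \chi(\ilm{\tau}SG)$, and then to recognize that every subsequent step is governed by Lemmas~\ref{lemma chi+1} and~\ref{lemma radius 3}, so no further loss can accrue.
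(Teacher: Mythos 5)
Your proof is correct and follows essentially the same route as the paper: the upper bound by adding one fresh colour per independent set, and the lower bound by applying Lemma~\ref{lemma chi+1} exactly through the all-transitive prefix, absorbing the one possible loss at the first anti-transitive step via the induced-subgraph inequality, and then using Lemma~\ref{lemma radius 3} to guarantee the radius-at-least-$3$ hypothesis of Lemma~\ref{lemma chi+1} at every later step. If anything, your handling of the indices around $\tau$ is more careful than the paper's own write-up, which contains a small off-by-one slip in the same spot.
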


\begin{proof}
For the upper bound, note that every transitive or anti-transitive step introduces an independent set, so using a new color for each time-step, we achieve a proper coloring of $\ilm{t}SG$ with $\chi(G)+t$ colors.
	
For the lower bound, first note that if $S$ is the all-1's sequence, then by repeated application of Lemma~\ref{lemma chi+1}, we have $\chi({\ilt{t}G})=\chi(G)+t$, so we are done. Let $\tau$ be the first index such that $s_{\tau}=0$. We find that $\ilm{\tau-1}SG=\ilt{\tau-1}G$, so $\chi({\ilm{\tau-1}SG})=\chi(G)+\tau$. It is straightforward to see that
\[
\chi({\ilm{\tau}SG})\geq \chi({\ilm{\tau-1}SG})=\chi(G)+\tau-1.
\]

By repeated application of Lemma~\ref{lemma radius 3}, $\ilm{t}SG$ has radius at least $3$ for all $t\geq \tau$, so by repeated application of Lemma~\ref{lemma chi+1}, $\chi({\ilm{t}SG})=\chi({\ilm{\tau}SG})+t-\tau\geq \chi(G)+\tau-1+t-\tau=\chi(G)+t-1$, completing the proof.
\end{proof}

A \emph{dominating set} in a graph $G$ is a set $D \subseteq V(G)$ such that $V(G) \subseteq N[D]$. The \emph{domination number}, denoted by $\gamma(G)$, is the minimum size of such a set. Given two vertices $u,v\in V(G)$ we will say \emph{the closed neighborhoods of $u$ and $v$ partition the vertex set of $G$}, or simply \emph{$u$ and $v$ partition the vertex set of $G$}, if $N[u]\cup N[v]=V(G)$ and $N[u]\cap N[v]=\emptyset$.

We show that in ILM graphs with at least two bits equal to zero, the domination number will be at most 3 after a sufficient number of time-steps.  However, for ILT graphs a straightforward discussion will show that transitive steps preserve the domination number, or more specifically for a graph $G$ we have $\gamma(G)=\gamma(\ilt{t}G)$ for all $t\geq 0.$

Note that if $A\subseteq V(G)$ is a dominating set in $G$, then $A$ will also dominate in $\ilt{t}G$, so $\gamma(G)\geq\gamma(\ilt{t}G)$. Assume $B$ is a dominating set of $\mathrm{LT}(G)$. If $B\subseteq V(G)$, then $B$ also dominates in $G$, so $\gamma(G)\leq \gamma(\mathrm{LT}(G))$. Otherwise, there exists some clone $w'\in V(\mathrm{LT}(G))\setminus V(G)$, say $w'$ is the clone of some vertex $w\in V(G)$, such that $w'\in B$. Note that $(B\setminus\{w'\})\cup \{w\}$ is a dominating set since $N[w']\subseteq N[w]$, which implies that we can always find a dominating set of $\mathrm{LT}(G)$ in $V(G)$, and thus, $\gamma(G)\leq \gamma(\mathrm{LT}(G))$. By induction, this implies that $\gamma(G)\leq \gamma(\ilt{t}G)$, and we are done.

We have the following theorem on the domination number of general ILM graphs.

\begin{theorem}\label{theorem dom3}
Let $G$ be a graph and $S=(s_0,s_1,s_2,\dots)$ be an binary sequence with at least two bits equal to $0$. If $\tau$ is the second index such that $s_{\tau}=0$, then for all $t\geq \tau+1$, $\gamma(\ilm{t}SG) \leq 3.$
\end{theorem}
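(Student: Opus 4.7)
The plan is to exhibit an explicit dominating set of size three in the graph produced by the most recent anti-transitive step occurring at or before time $t$, and then to propagate it forward through all subsequent transitive steps. Let $G_j = \ilm{j}{S}{G}$, and let $k$ be the largest index with $k < t$ and $s_k = 0$. Such a $k$ exists and satisfies $k \geq \tau$ because $s_\tau = 0$ and $\tau \leq t-1$. Thus $G_{k+1} = \mathrm{LAT}(G_k)$, and $G_t$ is obtained from $G_{k+1}$ by exactly $t-k-1$ transitive steps.

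The central step will be to show that $\gamma(G_{k+1}) \leq 3$. Since $k \geq \tau$, an anti-transitive step has already occurred strictly before time $k+1$ (at time $\tau_1+1$, where $\tau_1 < \tau$ is the first $0$ in $S$), so by Lemma~\ref{lemma radius 3} the graph $G_k$ has radius at least $3$. I pick $v, w \in V(G_k)$ with $d_{G_k}(v,w) \geq 3$, let $v^*$ denote the anti-clone of $v$ created at time $k+1$, and claim that $D = \{v, v^*, w\}$ dominates $G_{k+1}$. The old vertices are covered because $N_{k+1}(v^*) = V(G_k)\setminus N_k[v]$ by definition, so $N_k[v] \cup N_{k+1}[v^*] \supseteq V(G_k)$. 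For any new anti-clone $y^*$ with $y \in V(G_k)$, a vertex $u \in V(G_k)$ is adjacent to $y^*$ in $G_{k+1}$ exactly when $u \notin N_k[y]$; since $d_{G_k}(v,w) \geq 3$ forces $N_k[v] \cap N_k[w] = \emptyset$, every $y$ lies outside at least one of $N_k[v]$ and $N_k[w]$, so $y^*$ is adjacent to $v$ or to $w$ in $G_{k+1}$. The only anti-clone not caught this way is $v^*$, which is in $D$.

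It then remains to push $D$ through the transitive steps between time $k+1$ and time $t$, which is the same observation recorded for the ILT case just above the theorem: if $D$ dominates $H$, then any new clone $y' \in V(\mathrm{LT}(H))$ satisfies $N_{\mathrm{LT}(H)}(y') = N_H[y]$, and this meets $D$ because $D$ dominates $y$, hence $D$ dominates $\mathrm{LT}(H)$. Iterating, $D$ remains a dominating set of $G_t$, so $\gamma(G_t) \leq |D| = 3$.

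The main obstacle is the construction in the second paragraph: one has to see that the natural pair $\{v, v^*\}$ already covers $V(G_k)$ but misses precisely the anti-clones $y^*$ with $y \in N_k[v]$, that adding any new-generation vertex cannot help (anti-clones are pairwise non-adjacent), and that the right remedy is a second old vertex $w$ at distance at least $3$ from $v$ --- an object that Lemma~\ref{lemma radius 3} supplies exactly because the hypothesis on $\tau$ guarantees an earlier anti-transitive step. Locating this $w$ and verifying that $N_k[v] \cap N_k[w] = \emptyset$ is what makes the three-vertex dominating set work.
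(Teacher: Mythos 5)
Your proof is correct and follows essentially the same strategy as the paper's: build a three-vertex dominating set at the last anti-transitive step, where two of the vertices have disjoint closed neighborhoods in the prior graph and the third is an anti-clone covering what they miss, then push the set forward through the remaining transitive steps. The only (cosmetic) difference is how the disjoint pair is sourced --- the paper explicitly tracks a vertex $v$ and its anti-clone $v^*$ from the earlier anti-transitive step and uses $\{v,v^*,v^{**}\}$, whereas you invoke Lemma~\ref{lemma radius 3} to extract an abstract pair $v,w$ at distance at least~$3$ and use $\{v,w,v^*\}$; these are the same mechanism, since that lemma's content is precisely that a vertex and its anti-clone have disjoint closed neighborhoods.
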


\begin{proof}
Let $G_t=\ilm{t}SG$, and let $\beta_1<\beta_2<t$ be the largest and second largest indices such that $s_{\beta_1}=s_{\beta_2}=0$. Let $v$ be any vertex in $V(G_{\beta_1})$. Since $s_{\beta_1}=0$, $G_{\beta_1+1}=\mathrm{LAT}(G_{\beta_1})$. Let $v^*\in V(G_{\beta_1}+1)$ be the anti-transitive clone of $v$.  and let $v^{**}\in V(G_{\beta_2}+1)$ denote the anti-transitive clone of $v^*$ when we perform the anti-transitive step at time $\beta_2$. We claim that $D=\{v,v^*,v^{**}\}$ is a dominating set of $G_{\beta_2+1}$.
	
Any vertex in $V(G_{\beta_2})$ that is not adjacent to $v^*$ is adjacent to $v^{**}$ by definition, so we need only focus on the new vertices in $V(G_{\beta_2+1})\setminus V(G_{\beta_2})$. Note that $N_{\beta_1+1}[v]\cap N_{\beta_1+1}[v^*]=\emptyset$ by the definition of an anti-transitive step. Since there are only transitive steps between time $\beta_1$ and $\beta_2$, we also have that $N_{\beta_2}[v]\cap N_{\beta_2}[v^*]=\emptyset$. Thus, any vertex $y\in V(G_{\beta_2})$ is adjacent to at most one of $v$ or $v^*$, and so any newly created vertex $z\in V(G_{\beta_2+1})\setminus V(G_{\beta_2})$ is adjacent to at least one of $v$ or $v^*$. Thus, $D$ is a dominating set of $G_{\beta_2+1}$. Then there are only transitive steps between time $\beta_2$ and time $t$. The result follows from the fact that ILT steps preserve the domination number.
\end{proof}

The preceding theorem tells us that the domination number of an ILM graph after two anti-transitive steps is either $1$, $2$, or $3$. After an anti-transitive step, we cannot have a dominating vertex, so that leaves the possibility of domination number $2$ or $3$. We can characterize exactly when the domination number is $2$ and when it is $3$. To do so, first, we establish a helpful lemma.

\begin{lemma}\label{lemma domination pair partitions the vertex set}
Let $G$ be a graph. If $G$ contains a pair of vertices whose closed neighborhoods partition the vertex set, then the same pair also partition the vertex set in $\mathrm{LAT}(G)$ and $\mathrm{LT}(G)$. Further, if $\mathrm{LT}(G)$ contains a pair of vertices whose closed neighborhoods partition the vertex set, then $G$ does as well.
\end{lemma}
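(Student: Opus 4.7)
The plan is to handle the three implications separately, in each case by directly unwinding the definitions of transitive and anti-transitive clones to compute the relevant closed neighborhoods. Throughout, suppose $u, v \in V(G)$ satisfy $N_G[u] \cup N_G[v] = V(G)$ and $N_G[u] \cap N_G[v] = \emptyset$; in particular, $u \ne v$.

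For the forward direction to $\mathrm{LT}(G)$, the induced subgraph on $V(G)$ is unchanged, so the partition of $V(G)$ is inherited. It remains to distribute the clones. For a clone $w'$, the identity $N_{\mathrm{LT}(G)}(w') = N_G[w]$ gives $w' \sim u$ iff $u \in N_G[w]$ iff $w \in N_G[u]$, and similarly for $v$. Hence the clones of vertices of $N_G[u]$ go to $u$ and those of $N_G[v]$ go to $v$; disjointness of $N_G[u], N_G[v]$ and their covering $V(G)$ then yield the partition of $V(\mathrm{LT}(G))$. The direction for $\mathrm{LAT}(G)$ is dual: since $N_{\mathrm{LAT}(G)}(w^*) = V(G) \setminus N_G[w]$, an anti-clone $w^*$ satisfies $w^* \sim u$ iff $w \notin N_G[u]$ iff $w \in N_G[v]$, so the anti-clones of $N_G[v]$ fall under $u$ and those of $N_G[u]$ fall under $v$, again matched to the inherited partition on $V(G)$.

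For the reverse direction, suppose $a, b \in V(\mathrm{LT}(G))$ have disjoint closed neighborhoods whose union is $V(\mathrm{LT}(G))$. The key observation is that for every vertex $x$ of $\mathrm{LT}(G)$, the trace $N_{\mathrm{LT}(G)}[x] \cap V(G)$ equals $N_G[x]$ if $x \in V(G)$, and equals $N_G[w]$ if $x = w'$ is a clone. I would therefore define $u, v \in V(G)$ by leaving $a, b$ alone when they lie in $V(G)$, and replacing each clone by its parent otherwise. Intersecting the partition of $V(\mathrm{LT}(G))$ with $V(G)$ yields $N_G[u] \cup N_G[v] = V(G)$ and $N_G[u] \cap N_G[v] = \emptyset$, as required.

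The only place where I anticipate needing care is verifying that the replacement in the reverse direction yields two distinct vertices, since if the substitution collapsed $a$ and $b$ to the same parent the conclusion would fail. However, this collapse is ruled out: the resulting neighborhoods $N_G[u]$ and $N_G[v]$ are disjoint and nonempty (each contains its own center), so $u = v$ is impossible. Hence the argument is essentially bookkeeping once the defining identities $N_{\mathrm{LT}(G)}(w') = N_G[w]$ and $N_{\mathrm{LAT}(G)}(w^*) = V(G) \setminus N_G[w]$ are in place, and each assertion of the lemma reduces to a short set-theoretic calculation.
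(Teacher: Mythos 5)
Your proof is correct. The two forward implications (inheritance of the partition under $\mathrm{LT}$ and $\mathrm{LAT}$) are essentially the paper's argument: you both observe that a clone of a vertex in $N_G[u]$ attaches to $u$ (and, dually, an anti-clone of a vertex in $N_G[u]$ attaches to $v$), and that disjointness is preserved. Where you genuinely diverge is the reverse implication. The paper proceeds by case analysis on whether the partitioning pair $x,y$ of $\mathrm{LT}(G)$ consists of two original vertices, two clones (which forces $|V(G)|=2$ because clones form an independent set, followed by a hand check of $K_2$ and $2K_1$), or one of each (where it deduces that the parent $w$ of the clone must be isolated in $G$). You instead use the single uniform observation that $N_{\mathrm{LT}(G)}[x]\cap V(G)$ equals $N_G[x]$ for an original vertex and $N_G[w]$ for a clone $w'$, so restricting the partition to $V(G)$ and replacing clones by their parents immediately produces a partitioning pair in $G$; you also correctly note that the parents cannot coincide since the resulting closed neighborhoods are disjoint and nonempty. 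Your route is shorter, avoids the small-graph edge cases entirely, and gives the same conclusion; the paper's case analysis has the minor side benefit of identifying explicitly what the pair in $G$ looks like (e.g., that in the mixed case the parent is an isolated vertex), which is information the authors do not actually need elsewhere.
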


\begin{proof}
First, let $u,v\in V(G)$ be a pair of vertices that partition the vertex set of $G$. In $\mathrm{LAT}(G)$, every anti-transitive clone of the vertices in $N[u]\cap V(G)$ is adjacent to $v$ and each clone of a vertex in $N[v]\cap V(G)$ is adjacent to $u$, so $N[u]\cup N[v]=V(\mathrm{LAT}(G))$. It is straightforward to see that $N[u]\cap N[v]=\emptyset$. so $u$ and $v$ partition the vertex set of $\mathrm{LAT}(G)$.
	
Similarly, we claim that $u$ and $v$ will partition the vertex set of $\mathrm{LT}(G)$. Indeed, every transitive clone of a vertex in $N[u]\cap V(G)$ is adjacent to $u$, and every clone of a vertex in $N[v]\cap V(G)$ is adjacent to $v$, and it follows that $N[u]\cap N[v]=\emptyset$.
	
Let $x$ and $y$ be vertices that partition the vertex set of $\mathrm{LT}(G)$. If $x,y\in V(G)$, then they partition the vertex set of $G$. If not, then there exist a vertex $w\in V(G)$ such that the transitive clone of $w$, $w'=x$ without loss of generality. If there also exists a vertex $z\in V(G)$ such that the transitive clone of $z$, $z'=y$, then $x$ and $y$ must be the only clones so $|V(G)|=2$. Subsequently, if $G=K_2$, then $\mathrm{LT}(G)$ does not have a pair of vertices that partition the vertex set, thus a contradiction. Similarly, if $G=2K_1$, then $G$ does have a pair of vertices that partition the vertex set, and again a contradiction. Thus, we may assume that $x=w'$ and $y\in V(G)$. In fact, the vertex $y$ must be adjacent to every vertex in $(V(\mathrm{LT}(G))\setminus V(G))\setminus \{x\}$, and consequently, $y$ must be adjacent to every vertex in $V(G)\setminus \{w\}$. We find that $w$ must be an isolated vertex in $G$, since otherwise, $N[x]\cap N[y]\neq \emptyset$. Hence, $w$ and $x$ partition the vertex set of $G$, and we are done.
\end{proof}

We next characterize exactly which graphs and sequences give rise to an ILM graph with domination number $2$.

\begin{theorem}\label{theorem if domination=2}
Let $G$ be a graph and let $S=(s_0,s_1,s_2\dots)$ be any binary sequence that contains at least one bit equal to $0$. If $\tau_1$ is the first index such that $s_{\tau_1}=0$, then for all $t\geq \tau_1+1$, $\gamma(\ilm{t}SG)=2$ if and only if one of the following statements holds.
\begin{enumerate}
\item The graph $G$ has a pair of vertices whose closed neighborhoods partition the vertex set.
\item The graph $G$ contains an isolated vertex and $\tau_1=0$.
\item The graph $G$ contains a dominating vertex, $s_{\tau_1}=s_{\tau_1+1}=0$, and $t\geq \tau_1+2$.
\end{enumerate}
\end{theorem}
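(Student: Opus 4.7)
The plan is to reduce the problem to identifying when an anti-transitive step produces a dominating pair of size two, then propagate such pairs through the sequence using Lemma~\ref{lemma domination pair partitions the vertex set}. As shown in the discussion preceding Theorem~\ref{theorem dom3}, transitive steps preserve the domination number exactly, so $\gamma(\ilm{t}SG) = \gamma(\mathrm{LAT}(\ilm{\beta}SG))$, where $\beta$ is the largest index less than $t$ with $s_{\beta} = 0$. Since an anti-transitive step destroys every dominating vertex (no $x$ is adjacent to $x^*$), the lower bound $\gamma(\ilm{t}SG) \geq 2$ is automatic, and the problem reduces to characterizing when $\gamma(\mathrm{LAT}(H)) = 2$ for $H = \ilm{\beta}SG$.

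The first auxiliary claim is that $\gamma(\mathrm{LAT}(H)) = 2$ if and only if $H$ has a partitioning pair or an isolated vertex. A case analysis on a dominating pair $\{a,b\}$ in $\mathrm{LAT}(H)$ does the job. If $a,b \in V(H)$, covering all anti-clones is equivalent to $N_H[a] \cap N_H[b] = \emptyset$ (since $u^* \in N[a]$ iff $u \notin N_H[a]$), while covering $V(H)$ gives $N_H[a] \cup N_H[b] = V(H)$, so $\{a, b\}$ is a partitioning pair of $H$. If $a \in V(H)$ and $b = u^*$, covering each anti-clone $z^*$ with $z \neq u$ forces $N_H[a] \subseteq \{u\}$, so $a = u$ is isolated in $H$. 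The remaining case (both anti-clones) forces $|V(H)| = 2$ and $H = 2K_1$, which is already covered by the isolated-vertex branch. A short additional check shows the pair is in each case actually partitioning.

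Next I would track both properties through the iteration. A brief induction gives that $\ilm{\beta}SG$ has an isolated vertex iff $\beta = 0$ and $G$ has an isolated vertex, or $\beta = \tau_1 + 1$ and $G$ has a dominating vertex: $\mathrm{LT}(F)$ has no isolated vertices (each $x$ is adjacent to its clone $x'$), while $\mathrm{LAT}(F)$ has isolated vertices exactly as the anti-clones of dominating vertices of $F$, and $F = \ilm{\beta-1}SG$ has a dominating vertex only when every prior step was transitive, forcing $\beta - 1 = \tau_1$. A parallel induction on $\beta$, combining Lemma~\ref{lemma domination pair partitions the vertex set} (for transitive steps, in both directions) with the LAT case analysis above, shows that $\ilm{\beta}SG$ has a partitioning pair iff either (1) holds, or (2) holds with $\beta \geq 1$, or (3) holds with $\beta \geq \tau_1 + 2$. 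Combining these two characterizations with the reduction $\gamma(\ilm{t}SG) = \gamma(\mathrm{LAT}(\ilm{\beta}SG))$ yields the equivalence stated in the theorem. For the explicit forward direction, (2) is witnessed by $\{w, w^*\}$ in $\mathrm{LAT}(G)$, and (3) by $\{v^*, v^{**}\}$ in $\ilm{\tau_1+2}SG$, using that the dominating vertex $v$ of $G$ stays dominating through the initial transitive steps so that $v^*$ is isolated in $\ilm{\tau_1+1}SG$.

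The main obstacle is the bookkeeping in the partitioning-pair induction: one must distinguish propagation of an existing pair from the creation of a new pair via an LAT step applied to a graph with an isolated vertex, and verify that the degenerate case in which both members of the pair are new anti-clones forces $H = 2K_1$ and is therefore subsumed by the isolated-vertex branch rather than requiring separate treatment.
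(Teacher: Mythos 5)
Your proposal is correct, and it reaches the same characterization via a genuinely different organization of the argument. The paper's forward direction works by a max-degree observation (after the first anti-transitive step, $\Delta(G_t)=n_t/2-1$, so $\gamma=2$ forces a partitioning pair), then takes the \emph{minimal} index $\tau_2$ at which a partitioning pair exists and backtracks through the structure: $s_{\tau_2-1}=0$, one member of the pair is a fresh anti-clone, hence $G_{\tau_2-1}$ has an isolated vertex, hence $s_{\tau_2-2}=0$ and $G_{\tau_2-2}$ has a dominating vertex, which pins down conditions (1)--(3). You instead isolate a clean standalone equivalence --- $\gamma(\mathrm{LAT}(H))=2$ if and only if $H$ has a partitioning pair or an isolated vertex, with every dominating pair of $\mathrm{LAT}(H)$ automatically partitioning --- and then run a forward state-tracking induction on the two predicates ``has a partitioning pair'' and ``has an isolated vertex,'' using the exact preservation of the domination number under $\mathrm{LT}$ to reduce to the last anti-transitive step. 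The combinatorial content is identical (both rest on Lemma~\ref{lemma domination pair partitions the vertex set}, the degree bound after an $\mathrm{LAT}$ step, and the three structural sources of a partitioning pair), but your version buys a reusable lemma that fully describes how a single $\mathrm{LAT}$ step affects domination pairs, and it makes the role of $s_{\tau_1+1}=0$ and the threshold $t\geq\tau_1+2$ in condition (3) transparent as the requirement $\beta\geq\tau_1+1$ in the bookkeeping; the paper's backtracking is shorter to write but leaves that equivalence implicit. The degenerate cases you flag (both members of the pair being new anti-clones forcing $2K_1$, and $G=K_1$ where the unique vertex is simultaneously dominating and isolated) are exactly the ones the paper also has to dispose of, and your treatment of them is sound.
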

	
\begin{proof}
Let $G_t=\ilm{t}SG$ for all $t\geq 0$. First let us prove the reverse direction. If $G$ has a pair of vertices whose closed neighborhoods partition the vertex set, then so does $G_t$ by Lemma~\ref{lemma domination pair partitions the vertex set}, so $\gamma(G_t)=2$. If $G$ contains an isolated vertex, say $v$ and $\tau_1=0$, then $v$ and its anti-transitive clone, $v^*\in V(G_1)$ form a pair of vertices whose closed neighborhoods partition the vertex set. Indeed, $v^*$ will be adjacent to every vertex in $V(G)\setminus\{v\}$, and $v$ will be adjacent to every vertex in $(V(G_1))\setminus V(G))\setminus\{v^*\}$, and furthermore, we have that $N[v]\cap N[v']=\emptyset$. Thus, by Lemma~\ref{lemma domination pair partitions the vertex set}, in this case $\gamma(G_t)=2$ for all $t\geq \tau_1+1=1$.
	
Finally, if $G$ contains a dominating vertex, $s_{\tau_1}=s_{\tau_1+1}=0$, and $t\geq \tau_1+2$, first note that $G_{\tau_1}=\ilt{\tau_1}G$ still has a dominating vertex, say $v$. After performing an anti-transitive step, the anti-transitive clone $v^*$ of $v$ will be isolated, so by the preceding argument, after the second anti-transitive step (at time $\tau_1+1$), this will give us a graph with a pair of vertices whose closed neighborhoods partition the vertex set, so we are done.
	
We prove the forward direction. First note that $\Delta({G_{\tau_1+1}})=n_{\tau_1+1}/2-1$ since $G_{\tau_1+1}$ was the result of an anti-transitive step. Note that if the next step is transitive, a vertex of degree $n_{\tau_1+1}/2-1$ will gain $n_{\tau_1+1}/2$ new neighbors, which gives it degree $n_{\tau_1+1}-1=n_{\tau_1+2}/2-1$, and if the next step is anti-transitive, such a vertex will also gain $n_{\tau_1+1}/2$ new neighbors, so in all cases, $\Delta(G_{\tau_1+2})=n_{\tau_1+2}/2-1$. In fact, by the same reasoning, for all $t\geq \tau_1+1$, $\Delta(G_t)=n_t/2-1$. Thus, if $\gamma(G_t)=2$, $G_t$ must have a pair of vertices whose closed neighborhoods partition the vertex set.
	
If $G$ has a pair of vertices whose closed neighborhoods partition the vertex set, then we are done, so assume otherwise. Let $0<\tau_2\leq t$ be the smallest index such that $G_{\tau_2}$ has a pair of vertices whose closed neighborhoods partition the vertex set, say vertices $u,v\in V(G_{\tau_2})$. As a consequence of Lemma~\ref{lemma domination pair partitions the vertex set}, we must then have that $s_{\tau_2-1}=0$. If both $u$ and $v$ were in $V(G_{\tau_2-1})$, then they would be a pair of vertices that partition the vertex set, which contradicts the choice of $\tau_2$, so at least one, say $u$ is in $V(G_{\tau_2})\setminus V(G_{\tau_2-1})$. If we also have that $v\in V(G_{\tau_2})\setminus V(G_{\tau_2-1})$, then since $u$ and $v$ dominate $G_{\tau_2}$,  but are not adjacent to any other vertices in $V(G_{\tau_2})\setminus V(G_{\tau_2-1})$, we must have that $V(G_{\tau_2})\setminus V(G_{\tau_2-1})=2$, so $n_{\tau_2}=4$, so $G_{\tau_2-1}$ is either $K_2$ or $2K_1$. $2K_1$ has a pair of vertices  whose closed neighborhoods partition the vertex set, and in $\mathrm{LT}(K_2)$, the two new clones are isolated vertices, so they do not partition the vertex set. Thus, we have that $u\in V(G_{\tau_2})\setminus V(G_{\tau_2-1})$ and $v\in V(G_{\tau_2-1})$.
	
Let $v^*\in V(G_{\tau_2})\setminus V(G_{\tau_2-1})$ be the anti-transitive clone of $v$. Since $v$ is not adjacent to $v^*$, we must have that $v^*=u$, since otherwise, $v^*$ would not be dominated in $G_{\tau_2}$. But then, $v$ must be adjacent to all the vertices in $V(G_{\tau_2})\setminus V(G_{\tau_2-1})\setminus\{u\}$, so $v$ is not adjacent to any vertex in $V(G_{\tau_2-1})\setminus\{v\}$, meaning that $G_{\tau_2-1}$ has an isolated vertex. If $\tau_2-1=0$, then $G$ has an isolated vertex, and we are done, so assume that $\tau_2-1\geq 1$. We must then have that $s_{\tau_2-2}=0,$ since the minimum degree after a transitive step is at least $1$. We claim that either $G_{\tau_2-2}=K_1$, or $v\in V(G_{\tau_2-1})\setminus V(G_{\tau_2-2})$. Indeed, if $G_{\tau_2-2}\neq K_1$ and $v\in V(G_{\tau_2-2})$, then $v$ is isolated in $V(G_{\tau_2-2})$, but this would imply that in $G_{\tau_2-1}$, $v$ would be adjacent to the clones of the vertices in $V(G_{\tau_2-2})\setminus\{v\}$, giving a contradiction. In addition, if $G_{\tau_2-2}=K_1$, we must have that $\tau_2-2=\tau_1=0$, so $G=K_1$ has a dominating vertex, $s_0=s_1=0$, and $t\geq \tau_2\geq \tau_1+2$, so we are done. Thus, we can assume that $v\in V(G_{\tau_2-1})\setminus V(G_{\tau_2-2})$.
	
Let $x\in V(G_{\tau_2-2})$ be such that $v$ is the anti-transitive clone of $x$. The vertex $x$ must then be a dominating vertex in $G_{\tau_2-2}$ since otherwise, $v$ would have neighbors in $G_{\tau_2-1}$. Since a dominating vertex cannot result from an anti-transitive step, we must have that $s_i=1$ for all $i<\tau_2-2$, which implies that $\tau_1=\tau_2-2$, and thus, $s_{\tau_1}$ and $s_{\tau_1+1}=0$ and $t\geq \tau_1+2$. Finally, since $G_{\tau_2-2}$ contains a dominating vertex, $G$ must as well, completing the proof.
\end{proof}

 We now move our attention to the diameter of ILM graphs. To determine the diameter, we first need to establish a few axillary results.

\begin{lemma}\label{lemma antitransitive disconnected}
Let $G$ be a graph. The graph $\mathrm{LAT}(G)$ is disconnected if and only if $G$ has a dominating vertex, or is the disjoint union of two complete graphs.
\end{lemma}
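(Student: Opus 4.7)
The plan is to prove the biconditional by handling the two directions separately, with the reverse direction as a direct verification and the forward direction via the contrapositive.

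For the reverse direction, suppose first that $G$ has a dominating vertex $v$. Then $V(G)\setminus N_G[v]=\emptyset$, so the anti-transitive clone $v^*$ has no neighbors in $\mathrm{LAT}(G)$ and is therefore an isolated vertex, which witnesses disconnection. Suppose instead that $G=K_a\sqcup K_b$ with non-empty vertex sets $A$ and $B$. Since $N_G[v]=B$ for $v\in B$ and $N_G[u]=A$ for $u\in A$, each $v^*$ with $v\in B$ is adjacent exactly to $A$, and each $u^*$ with $u\in A$ is adjacent exactly to $B$. Combined with the fact that $A$ and $B$ are non-adjacent in $G$ and that anti-clones born in the same step are pairwise non-adjacent, I verify that the partition $\{A\cup\{v^*:v\in B\},\; B\cup\{u^*:u\in A\}\}$ of $V(\mathrm{LAT}(G))$ has no edges across it, so $\mathrm{LAT}(G)$ is disconnected.

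For the forward direction, I argue the contrapositive: assuming $G$ has no dominating vertex and that $G$ is not of the form $K_a\sqcup K_b$, I show that $\mathrm{LAT}(G)$ is connected. I split into three cases based on the component structure of $G$. First, if $G$ is connected, then $V(G)$ lies in one component of $\mathrm{LAT}(G)$, and because no $v\in V(G)$ is dominating, there exists some $w\in V(G)\setminus N_G[v]$ with $v^*w\in E(\mathrm{LAT}(G))$, absorbing each $v^*$ into that component. Second, if $G$ has at least three components, then for any $v$ in a component $H$ the vertex $v^*$ is adjacent to every vertex outside $H$; choosing $v$ in turn from different components forces all components of $G$ to merge in $\mathrm{LAT}(G)$, after which every anti-clone is absorbed as in the first case. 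Third, if $G$ has exactly two components but at least one of them, say the one with vertex set $A$, fails to be a clique, I pick non-adjacent $u,v\in A$; then $v^*$ is adjacent both to $u\in A$ and to every vertex of the other component, bridging the two components of $G$ inside $\mathrm{LAT}(G)$, and the remaining anti-clones attach as in the previous cases.

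The main obstacle is purely organizational: one must check in each case that \emph{every} anti-clone, not just the ones used for bridging, ends up in the single large component, which is why the non-dominating hypothesis is invoked uniformly across cases. A secondary point to verify is that small graphs such as $K_1$, $K_2$, and $2K_1$ are correctly classified by the statement: $K_1$ and $K_2$ each possess a dominating vertex, while $2K_1=K_1\sqcup K_1$ is of the required disjoint-union form, so no exceptional cases arise.
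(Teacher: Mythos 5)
Your proof is correct and follows essentially the same route as the paper: the forward direction is handled via the contrapositive with a case split on the number of components of $G$ (one, two, or at least three), using exactly the same bridging observations about which anti-clones connect which components. The only difference is that you spell out the reverse direction, which the paper dismisses as straightforward.
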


\begin{proof}
The reverse direction is straightforward to verify, so we will focus on the forward direction. We will assume that $G$ has no dominating vertex and is not the disjoint union of two cliques, and show that $\mathrm{LAT}(G)$ is connected. Let $G$ have components $A_1,A_2,\dots,A_\ell$, and let $A_i^*\subseteq V(\mathrm{LAT}(G))\setminus V(G)$ denote the clones of the vertices in $A_i$ for each $1\leq i\leq \ell$. Every vertex in $A_i$ is adjacent to every vertex in $A_j^*$ whenever $i\neq j$. Whenever $\ell\geq 3$, $\mathrm{LAT}(G)$ is connected, so it must be the case that $\ell\in \{1,2\}$.

If $\ell=2$, then all the vertices in $A_1\cup A_2^*$ must be connected, and similarly all vertices in $A_1^*\cup A_2$ must be connected. Since $G$ is not a disjoint union of cliques, there is a pair, say $u,v\in A_1$ such that $uv\not\in E(G)$. Thus, $u\in A_1$ is adjacent to $v^*\in A_1^*$, so all the vertices, are connected.

If $\ell=1$ and $G$ does not contain a dominating vertex, then it is evident that the vertices in $V(G)$ are all connected. For any $u\in V(G)$, there exists $v\in V(G)$ such that $uv\not\in E(G)$, so $u^*v\in E(\mathrm{LAT}(G))$. Therefore, every vertex in $V(\mathrm{LAT}(G))\setminus V(G)$ is connected to $V(G)$, so $G$ is connected, completing the proof.
\end{proof}

\begin{lemma}\label{lemma non dominating triples diameter}
Let $G$ be a graph. If $\gamma(G)\geq 3$, then $\mathrm{diam}(\mathrm{LAT}(G))\leq 3$.
\end{lemma}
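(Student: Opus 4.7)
The plan is to verify directly that any two vertices of $\mathrm{LAT}(G)$ are joined by a path of length at most $3$. Write $V(\mathrm{LAT}(G)) = V(G) \cup V^*$, where $V^* = \{x^* : x \in V(G)\}$ is the set of anti-clones, and recall that $N_{\mathrm{LAT}(G)}(x^*) = V(G) \setminus N_G[x]$ and that distinct anti-clones in $V^*$ are pairwise non-adjacent. The workhorse observation is that, because $\gamma(G) \geq 3$, no set of at most two vertices of $V(G)$ dominates $G$; consequently, for any $x, y \in V(G)$ (possibly with $x = y$) there is some $w \in V(G) \setminus (N_G[x] \cup N_G[y])$, and its anti-clone $w^*$ is adjacent in $\mathrm{LAT}(G)$ to both $x$ and $y$.

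Three cases then cover all pairs. First, for distinct $u, v \in V(G)$, picking $w \in V(G) \setminus (N_G[u] \cup N_G[v])$ gives the length-$2$ path $u, w^*, v$. Second, for distinct $u^*, v^* \in V^*$, one has $u \neq v$, and the same type of choice yields a length-$2$ path $u^*, z, v^*$ with $z \in V(G) \setminus (N_G[u] \cup N_G[v])$. Third, for a mixed pair $u \in V(G)$ and $v^* \in V^*$, the edge $u v^*$ exists whenever $u \neq v$ and $uv \notin E(G)$; the remaining subcases ($u = v$, or $uv \in E(G)$) require a length-$3$ path. For these I first choose $w \in V(G) \setminus N_G[u]$ using the fact that $\{u\}$ does not dominate, and then $z \in V(G) \setminus (N_G[w] \cup N_G[v])$ using the fact that $\{v, w\}$ does not dominate. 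The walk $u, w^*, z, v^*$ uses only edges guaranteed by the definition of anti-clone, and a brief check using $w \notin N_G[u]$ and $z \notin N_G[v]$ together with the subcase hypothesis on $u$ and $v$ shows that its four vertices are distinct.

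The main obstacle is precisely the mixed subcase in which $u = v$ or $uv \in E(G)$. There the two endpoints are non-adjacent and typically share no common neighbor in $\mathrm{LAT}(G)$: a bridge in $V(G)$ would have to be simultaneously a neighbor of $u$ and a non-neighbor of $v$, which is impossible when $u = v$ and need not exist when $uv \in E(G)$, while bridges in $V^*$ are ruled out by the pairwise non-adjacency of anti-clones. This forces the path to have length exactly three, which is why two separate invocations of the non-domination of pairs are needed: one to select the first anti-clone $w^*$ and a second to select the intermediate original vertex $z$. Combining the three cases yields $\mathrm{diam}(\mathrm{LAT}(G)) \leq 3$.
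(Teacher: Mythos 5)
Your proof is correct and follows essentially the same route as the paper's: both arguments split into the same cases (two originals, two anti-clones, and a mixed pair) and both exploit the fact that no one or two vertices dominate $G$ to produce common non-neighbors whose anti-clones (or which themselves) serve as intermediate vertices, with the mixed pair requiring a path of length three built from two such selections. Your write-up is in fact somewhat more careful than the paper's about the troublesome mixed subcases ($u=v$ or $uv\in E(G)$) and about checking that the four vertices of the length-three walk are distinct.
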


\begin{proof}
Let $x,y\in V(G)$ be a pair of vertices. We will show that in $\mathrm{LAT}(G)$, $d(a,b)\leq 3$ for all $a\neq b$, $a,b\in \{u,v,u^*,v^*\}$, where $u^*,v^*\in V(\mathrm{LAT}(G))\setminus V(G)$ are the anti-transitive clones of $u$ and $v$ respectively. Since $\gamma(G)\geq 3$, there exists some $z\in V(G)$ that is not adjacent to either $x$ or $y$. if $z^*$ is the anti-transitive clone of $z$, then $(x,z^*,y)$ is a path in $\mathrm{LAT}(G)$, so $d(x,y)\leq 2$. A similar argument shows $d(x,z)=d(y,z)=2$ in $\mathrm{LAT}(G)$. Since $x^*z$ and $y^*z$ are edges in $\mathrm{LAT}(G)$, we have that $d(x^*,y^*)=2$, and $d(a,b)\leq 3$ for all $a\in \{x^*,y^*\}$ and $b\in \{x,y\}$. Since this is true for all pairs $x$ and $y$ in $V(G)$, $\mathrm{diam}(\mathrm{LAT}(G))\leq 3$.
\end{proof}

Now we will see that the diameter of an ILM graph eventually becomes $3$, usually after only two anti-transitive steps.

\begin{theorem}\label{theorem diameter after two time-steps}
Let $G\neq K_1$ be a graph that is not the disjoint union of two cliques, and  $S=(s_0,s_1,s_2,\dots)$ be a binary sequence with at least two bits equal to $0$. If $\tau$ is the index of the second time-step such that $s_{\tau}=0$, then for all $t\geq \tau+1$, $\mathrm{diam}(\ilm{t}SG)=3$.
\end{theorem}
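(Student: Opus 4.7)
The plan is to prove the two bounds $\mathrm{diam}(G_t) \geq 3$ and $\mathrm{diam}(G_t) \leq 3$ separately. Write $G_t = \ilm{t}SG$, and let $\tau_1 < \tau$ denote the first two indices with $s_{\tau_1} = s_\tau = 0$.

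For the lower bound, I would apply the first part of Lemma~\ref{lemma radius 3} at time $\tau_1 + 1$ to obtain that $G_{\tau_1 + 1}$ has radius at least $3$. An induction on $t$, using the first part of Lemma~\ref{lemma radius 3} at any subsequent anti-transitive step and the second part at any subsequent transitive step, shows that the radius stays at least $3$ at every later time, so $\mathrm{diam}(G_t) \geq 3$ for all $t \geq \tau + 1$.

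For the upper bound I would induct on $t \geq \tau+1$. In the transitive case $s_{t-1} = 1$ we have $G_t = \mathrm{LT}(G_{t-1})$, and the true-twin relation $N_{G_t}[v'] = N_{G_t}[v]$ between each transitive clone $v'$ and its original $v$ lets any shortest path be rewritten using only originals, so $\mathrm{diam}(G_t) = \mathrm{diam}(G_{t-1})$. In the anti-transitive case $s_{t-1} = 0$ we have $G_t = \mathrm{LAT}(G_{t-1})$, and since at least one anti-transitive step has already preceded, $G_{t-1}$ has no dominating vertex, so Theorem~\ref{theorem dom3} forces $\gamma(G_{t-1}) \in \{2,3\}$; when $\gamma(G_{t-1}) \geq 3$, Lemma~\ref{lemma non dominating triples diameter} directly yields $\mathrm{diam}(G_t) \leq 3$. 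The base case $t = \tau+1$ fits this same anti-transitive template with $t-1 = \tau$. Along the way I would verify, using Lemma~\ref{lemma antitransitive disconnected}, that $G_t$ is connected for every $t \geq \tau+1$: the ``no dominating vertex'' requirement is guaranteed by the earlier anti-transitive step, and the ``not a disjoint union of two cliques'' requirement propagates forward from the hypothesis on $G$, because for $|V(G_{t-1})| \geq 3$ the set of clones or anti-clones introduced at any step is an independent set too large to fit within two cliques.

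The main obstacle is the case $\gamma(G_{t-1}) = 2$. For this I would use Theorem~\ref{theorem if domination=2}, together with the observation made in its proof that $\Delta(G_{t-1}) = n_{t-1}/2 - 1$; any size-two dominating set $\{p,q\}$ of $G_{t-1}$ therefore has closed neighborhoods that partition $V(G_{t-1})$, each of size $n_{t-1}/2$. In $G_t = \mathrm{LAT}(G_{t-1})$ the anti-clones of $p$ and $q$ born at step $t$, say $p^\dagger$ and $q^\dagger$, satisfy $N_{G_t}(p^\dagger) = N_{G_{t-1}}[q]$ and $N_{G_t}(q^\dagger) = N_{G_{t-1}}[p]$. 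These two large-degree ``hubs,'' combined with the anti-clones $x^\dagger$ of individual vertices $x$ (which furnish a common neighbor in $G_t$ for any pair of vertices having $x$ as a common non-neighbor in $G_{t-1}$), give enough routes to bound $d_{G_t}(u,v) \leq 3$ for every pair $u,v\in V(G_t)$ by a case analysis on which side of the $\{p,q\}$-partition each of $u$ and $v$ lies in.
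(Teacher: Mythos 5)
Your proposal follows essentially the same route as the paper: the lower bound via Lemma~\ref{lemma radius 3}, the reduction of the upper bound to the step immediately following an anti-transitive step, the dichotomy on $\gamma(G_{t-1})$ with Lemma~\ref{lemma non dominating triples diameter} disposing of $\gamma\geq 3$, and the partition structure plus connectivity (via Lemma~\ref{lemma antitransitive disconnected}) for the case $\gamma=2$.

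Three points to tighten. First, $v$ and its transitive clone $v'$ are not true twins: $N_{\mathrm{LT}(G)}[v']=N_G[v]\cup\{v'\}$ is in general a proper subset of $N_{\mathrm{LT}(G)}[v]$, since $v$ is also adjacent to the clones of its neighbors. The containment $N[v']\subseteq N[v]$, together with $N(v')=N_G[v]$, is what actually lets you reroute geodesics and conclude that transitive steps preserve the diameter. Second, Theorem~\ref{theorem dom3} does not apply to $G_{t-1}=G_{\tau}$ in your base case, since that graph has seen only one anti-transitive step; but you do not need it, because the split ``$\gamma=2$ versus $\gamma\geq 3$'' is already exhaustive once $\Delta(G_{t-1})=n_{t-1}/2-1$ rules out $\gamma=1$. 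Third, the case $\gamma(G_{t-1})=2$ is where all the work lies, and your sketch understates it: the hubs $p^\dagger,q^\dagger$ and the common-non-neighbor clones $x^\dagger$ handle most pairs, but there remain degenerate pairs --- for instance two vertices (or anti-clones of vertices) whose closed neighborhoods in $G_{t-1}$ themselves partition $V(G_{t-1})$, or an old vertex $u$ with $N_{t-1}[u]=N_{t-1}[p]$ against the anti-clone of a vertex $v$ with $N_{t-1}[v]=N_{t-1}[p]$ --- for which no common neighbor exists and one must invoke connectivity of $G_{t-1}$ to produce an edge crossing the partition and hence a path of length exactly $3$. This is precisely the delicate endgame the paper carries out; your case analysis does close, but only after these exceptional pairs are identified and handled explicitly.
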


\begin{proof}
Let $G_t=\ilm{t}SG$. By repeated application of Lemma~\ref{lemma radius 3}, we have that the radius of $G_t$ is at least three for all $t\geq \tau+1$, so the diameter of $G_t$ is at least three for all $t\geq \tau+1$.
	
We first consider the upper bound. Since transitive steps preserve the diameter of a graph, we need only focus on the diameter immediately after the most recent anti-transitive step, so without loss of generality we will assume that $s_{t-1}=0$, so that $G_t=\mathrm{LAT}(G_{t-1})$.

Observe that for any graph $F\not\in \{K_1,2K_1\}$, we have that neither $\mathrm{LT}(F)$ nor $\mathrm{LAT}(F)$ are a disjoint union of two cliques. Indeed, first if $F=K_2$, then $\mathrm{LT}(F)$ is connected and $\mathrm{LAT}(F)$ has three components, so the claim is true for $F=K_2$. Now let $|V(F)|\geq 3$. If $\mathrm{LT}(F)$ or $\mathrm{LAT}(F)$ is a disjoint union of cliques, then no two clones can be in the same component, implying that $\mathrm{LT}(F)$ or $\mathrm{LAT}(F)$ must have at least $|V(F)|>2$ components, and thus, cannot be a disjoint union of two cliques. We then have that $G_i$ is not a disjoint union of two cliques for any $i\geq 0$, and furthermore via induction, it suffices to proof the theorem in the case where $t-1=\tau$ is the second index with $s_{t-1}=0$, and where $s_0=0$.
 	
By Lemma~\ref{lemma non dominating triples diameter}, we are done if $\gamma(G_{t-1})\geq 3$. Since there was one anti-transitive step before time $t-1$, $\Delta(G_{t-1})=\frac{n_{t-1}}2-1$, so $\gamma(G_{t-1})\neq 1$, so the only remaining case is when $\gamma(G_{t-1})=2$, implying that $G_{t-1}$ has a pair of vertices whose closed neighborhoods partition the vertex set.
	
We will next show that $G_{t-1}$ and $G_t$ must be connected. By theorem \ref{theorem if domination=2}, since there is only one $0$ preceding $s_{t-1}$ in $S$, $G$ cannot have a dominating vertex if $G_{t-1}$ has domination number $2$. By Lemma~\ref{lemma antitransitive disconnected}, $G_1$ is connected, and $G_{t-1}$ is as well since transitive steps preserve connectedness. Similarly, $G_{t-1}$ cannot have a dominating vertex since $\Delta(G_{t-1})=n_{t-1}/2-1$, and also is not a disjoint union of two cliques, so again by Lemma~\ref{lemma antitransitive disconnected} $G_t$ is connected.
	
Let $x$ and $y$ be a pair of vertices that partition the vertex set of $G_{t-1}$. By Lemma~\ref{lemma domination pair partitions the vertex set}, it follows that $x$ and $y$ also partition the vertex set of $G_t$. Hence, every pair of vertices in $N_t[x]$ are at distance at most two from each other, and similarly for $N_t[y]$. If a vertex $u\in N_t[x]$ has a neighbor in $N_t[y]$, or vice versa, then $u$ is distance at most $3$ from every vertex. Thus, we need only worry about pairs $u,v\in V(G_t)$ with $N_t[u]\subseteq N_t[x]$ and $N_t[v]\subseteq N_t[y]$.
	
If both $u$ and $v$ are in $V(G_{t-1})$, then $\deg_t(u)=\deg_t(v)=n_t/2-1$, and so by connectivity, $d(u,v)\leq 3$. Thus, one of the vertices must be a new anti-transitive clone, say $v$ is the clone of some vertex $w\in N_{t-1}[x]$. We then have that $N_{t-1}[w]=N_{t-1}[x]$; otherwise, $v$ would be adjacent to some vertex in $N_{t-1}[x]$, and since $w$ and $y$ partition the vertex set of $G_{t-1}$, we can assume, without loss of generality,  that actually $w=x$. We then have that $N_{t-1}[y]\subseteq N_t[v]$ since $N_{t-1}[x]\cap N_{t-1}[y]=\emptyset$. Since $G_{t-1}$ is connected, there is an edge from $N_{t-1}[x]$ to $N_{t-1}[y]$, so if $N_t[u]=N_t[x]$, then this gives a path from $u$ to $v$ of length $3$. Thus, $N_t[u]\subsetneq N_t[x]$, and thus, $\deg_t(u)<n_t/2-1$, so $u$ must also be an anti-transitive clone of some vertex in $G_{t-1}$, and by a symmetric argument to the one for $v$, $u$ must be the clone of a vertex that dominates $N_{t-1}[y]$. Assume, without loss of generality, that $u$ is the clone of $y$. Since $u$ and $v$ are clones of a pair of vertices that partition the vertex set of $G_{t-1}$, $N_t[u]=V(G_{t-1})\setminus N_t[y]=N_{t-1}[x]$ and $N_t[v]=N_{t-1}[y]$. Since $G_{t-1}$ is connected, there is a path of length $3$ from $u$ to $v$, and we are done.
\end{proof}

For the forbidden graphs in Theorem~\ref{theorem diameter after two time-steps}, it can take up to five anti-transitive steps for $K_1$ to become diameter $3$, since the sequences of graphs $K_1$, $2K_1$, $2K_2$, $2(K_4-e)$ are the graphs one gets from performing the first three anti-transitive steps, and by direct checking, the fourth step gives a graph with diameter $4$.

For $2K_1$, it takes four anti-transitive steps, and then for all other $K_s\cup K_t$, it takes three anti-transitive steps. Indeed, for $K_s\cup K_t$, the first anti-transitive step gives a graph that contains two components, one isomorphic to a complete join between $K_s$ and $\overline{K_t}$, and the other that is the complete join of $K_t$ and $\overline{K_s}$. Since we are not allowing $s=t=1$, one of these components is missing at least one edge, so the next anti-transitive step will yield connectivity. So, after performing any number of transitive steps, once we finally do the second anti-transitive step, the clones of vertices that dominated their components will be at distance $4$ from each other, and then finally after any further anti-transitive step, the  diameter will drop to $3$ and stay there.

\section{Spectral Expansion}\label{SEsec}

For a graph $G$ and sets of vertices $X,Y \subseteq V(G)$, define $E(X,Y)$ to be the set of edges in $G$ with one endpoint in $X$ and the other in $Y.$ For simplicity, we write $E(X)=E(X,X).$ Let $A$ denote the adjacency matrix and $D$ denote the diagonal degree matrix of a graph $G$. The \emph{normalized Laplacian} of $G$ is
\[ \mathcal{L} = I - D^{-1/2}AD^{-1/2}.\]
Let $0 = \lambda_0 \leq \lambda_1 \leq \cdots \leq \lambda_{n-1} \leq 2$ denote
the eigenvalues of $\mathcal{L}$. The \emph{spectral gap} of the normalized Laplacian is defined as
\[
\lambda = \max\{ |\lambda_1 - 1|, |\lambda_{n-1} - 1| \}.
\]

We will use the expander mixing lemma for the normalized Laplacian~\cite{sgt}. For sets of vertices $X$ and $Y$, we use the notation $\vol(X) = \sum_{v \in X} \deg(v)$ for the volume of $X$, $\overline{X} = V \setminus X$ for the complement of $X$, and, $e(X,Y)$ for the number of edges with one end in each of $X$ and $Y.$ Note that $X \cap Y$ need not be empty, and in this case, the edges completely contained in $X\cap Y$ are counted twice. In particular, $e(X,X) = 2 |E(X)|$.

\begin{lemma}[Expander mixing lemma]\cite{sgt}\label{mix}
If $G$ is a graph with spectral gap $\lambda$, then, for all sets $X \subseteq V(G),$
\[
\left| e(X,X) - \frac{(\vol(X))^{2}}{\vol(G)} \right| \leq \lambda \frac{\vol(X)\vol(\overline{X})}{\vol(G)}.
\]
\end{lemma}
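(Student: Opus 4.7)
The plan is to give the standard spectral proof, working with the normalized adjacency matrix $\hat{A} = D^{-1/2}AD^{-1/2}$. Since $\mathcal{L} = I - \hat{A}$, the eigenvalues of $\hat{A}$ are $\mu_i = 1 - \lambda_i$ with a shared orthonormal eigenbasis $v_0, v_1, \ldots, v_{n-1}$. The largest eigenvalue $\mu_0 = 1$ has eigenvector $v_0 = D^{1/2}\mathbf{1}/\sqrt{\vol(G)}$ (a routine check using $A\mathbf{1} = D\mathbf{1}$), and the spectral gap hypothesis translates into $|\mu_i| \leq \lambda$ for every $i \geq 1$.

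For a subset $X \subseteq V(G)$ with indicator vector $\chi_X$, I would set $y = D^{1/2}\chi_X$ and exploit the key identity $y^T \hat{A} y = \chi_X^T A \chi_X = e(X,X)$. Expanding $y = \sum_i c_i v_i$ in the orthonormal basis, one reads off $c_0 = \langle y, v_0\rangle = \vol(X)/\sqrt{\vol(G)}$, and Parseval applied to $\|y\|^2 = \chi_X^T D \chi_X = \vol(X)$ yields
\[
\sum_{i \geq 1} c_i^2 \;=\; \vol(X) - \frac{\vol(X)^2}{\vol(G)} \;=\; \frac{\vol(X)\vol(\overline{X})}{\vol(G)}.
\]
Combining these gives $e(X,X) = \sum_i \mu_i c_i^2 = \vol(X)^2/\vol(G) + \sum_{i \geq 1} \mu_i c_i^2$, and the triangle inequality together with $|\mu_i| \leq \lambda$ for $i \geq 1$ immediately produces the required bound.

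The only real obstacle is keeping the normalizations straight: one must identify $v_0$ with $D^{1/2}\mathbf{1}/\sqrt{\vol(G)}$ (rather than the uniform vector, which would be correct only in the regular case), and recognize that the weighting by $D^{1/2}$ forces $\|y\|^2 = \vol(X)$ rather than $|X|$. These two points reflect the fact that the stationary distribution of the simple random walk on $G$ is proportional to the degree sequence. Once they are settled, the inequality follows from a single application of the spectral decomposition, so no deeper obstacle arises.
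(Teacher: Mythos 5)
Your proof is correct. The paper does not prove this lemma itself---it cites it directly from Chung's \emph{Spectral Graph Theory}---and your argument is precisely the standard spectral decomposition proof from that source: all the normalizations (the eigenvector $D^{1/2}\mathbf{1}/\sqrt{\mathrm{vol}(G)}$, the identity $\|D^{1/2}\chi_X\|^2=\mathrm{vol}(X)$, and the bound $|1-\lambda_i|\leq\lambda$ for $i\geq 1$ coming from the paper's definition of the spectral gap) check out.
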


A spectral gap bounded away from zero is an indication of bad expansion properties, which is characteristic for social networks, \cite{estrada}. The next theorem represents a drastic departure from the good expansion found in binomial random graphs, where $\lambda = o(1)$~\cite{sgt,CL}.

\begin{theorem}\label{thm:specgap}
Let $G$ be a graph and $S = (s_0, s_1, s_2 \ldots)$ be a binary sequence. For all $t\geq 1$, we have that
\[
\lambda_t \geq \frac{1}4-o(1),
\]
where $\lambda_t$ is the spectral gap of $\ilm{t}SG$.
\end{theorem}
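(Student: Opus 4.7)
The approach is to apply the Expander Mixing Lemma (Lemma~\ref{mix}) with the set $X$ taken to be the $n_{t-1}$ new vertices added in forming $\ilm{t}SG$ from $\ilm{t-1}SG$. As noted when the model is defined, clones and anti-clones introduced in the same time-step are pairwise non-adjacent, so $X$ is an independent set in $\ilm{t}SG$ and $e(X,X)=0$. With this, the conclusion of Lemma~\ref{mix} rearranges immediately to
\[
\lambda_t \,\geq\, \frac{\vol(X)}{\vol(\overline X)},
\]
and the rest of the proof is volume bookkeeping.

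Next I would compute the relevant volumes by summing degrees directly from the definitions of $\mathrm{LT}$ and $\mathrm{LAT}$. If $s_{t-1}=1$, each new clone $x'$ has degree $\deg_{t-1}(x)+1$ in $\ilm{t}SG$ and each old vertex has degree $2\deg_{t-1}(x)+1$, giving $\vol(X) = \vol(\ilm{t-1}SG) + n_{t-1}$ and $\vol(\overline X) = 2\vol(\ilm{t-1}SG) + n_{t-1}$. The ratio $\vol(X)/\vol(\overline X)$ is at least $1/2$ for any non-negative value of $\vol(\ilm{t-1}SG)$, so $\lambda_t \geq 1/2 > 1/4$ in this case with no error term whatsoever. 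If $s_{t-1}=0$, the analogous calculation yields $\vol(X) = n_{t-1}(n_{t-1}-1) - \vol(\ilm{t-1}SG)$ and $\vol(\overline X) = n_{t-1}(n_{t-1}-1)$, so that $\vol(X)/\vol(\overline X) = 1 - \rho_{t-1}$, where $\rho_{t-1} := |E(\ilm{t-1}SG)|/\binom{n_{t-1}}{2}$ is the edge density of $\ilm{t-1}SG$.

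The main obstacle, and the only step that involves the $o(1)$, is to establish a uniform upper bound $\rho_{t-1} \leq 3/4 + o(1)$ independent of $S$ and $G$. I would prove this for $t \geq 2$ by splitting on $s_{t-2}$: if the step producing $\ilm{t-1}SG$ was anti-transitive, then \eqref{equation anti edge recurrence} gives $|E(\ilm{t-1}SG)| \leq n_{t-2}(n_{t-2}-1)$ and hence $\rho_{t-1} \leq (n_{t-2}-1)/(2n_{t-2}-1) \leq 1/2 + o(1)$; if that step was transitive, then the recurrence $\vol(\mathrm{LT}(H)) = 3\vol(H) + 2|V(H)|$ (obtained by a direct degree sum) combined with the trivial bound $\vol(H) \leq |V(H)|(|V(H)|-1)$ gives $\rho_{t-1} \leq (3n_{t-2}-1)/(4n_{t-2}-2) = 3/4 + O(1/n_{t-2})$. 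In either subcase the bound holds, and substituting into $\lambda_t \geq 1 - \rho_{t-1}$ yields $\lambda_t \geq 1/4 - o(1)$, completing the argument.
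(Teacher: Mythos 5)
Your proof is correct and follows essentially the same route as the paper: the same choice of $X$ as the independent set of newly added (anti-)clones, the same application of the expander mixing lemma to get $\lambda_t\geq \vol(X)/\vol(\overline{X})$, and the same volume computations in the two cases. The only (cosmetic) difference is in the final density bound: you obtain $e_{t-1}\leq \left(\tfrac34+o(1)\right)\binom{n_{t-1}}{2}$ by splitting on $s_{t-2}$ and using the edge recurrences, whereas the paper gets the same bound in one stroke from the fact that for $t\geq 2$ the graph $\ilm{t-1}SG$ contains an independent set on half its vertices.
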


\begin{proof}
Let $G_t=\ilm{t}SG$. Suppose first that $s_{t-1} = 1$, so that $G_t = \mathrm{LT}(G_{t-1})$. Let $X=V(G_t)\setminus V(G_{t-1})$ be the set of cloned vertices added to $G_{t-1}$ to form $G_t$. Since $X$ is an independent set, $e(X,X)=0$. We derive that
	
\begin{align*}
\vol(G_t) &= 6e_{t-1}+2n_{t-1}, \\[0.1cm]
\vol(X) &= 2e_{t-1}+n_{t-1}, \quad \text{and}\\[0.1cm]
\vol(\overline{X}) &= \vol(G_t) - \vol(X) = 4e_{t-1}+n_{t-1}.
\end{align*}
Hence, by Lemma~\ref{mix}, we have that
\begin{align*}
\lambda_t &\ge \frac{(\vol(X))^{2}}{\vol(G_t)} \cdot \frac{\vol(G_t)}{\vol(X)\vol(\overline{X})}\\
&= \frac{\vol(X)}{\vol(\overline{X})} \\
&= \frac{2e_{t-1}+n_{t-1}}{4e_{t-1}+n_{t-1}}>1/2.
\end{align*}
	
Suppose now that $s_{t-1} = 0$, so that $G_t = \mathrm{LAT}(G_{t-1})$. Let $X=V(G_t)\setminus V(G_{t-1})$ be the set of anti-transitive clones added to $G_{t}$ to form $G_{t+1}$. We derive that
	
\begin{align*}
\vol(G_t) &= 2n_{t-1}^2-2e_{t-1}-2n_{t-1}, \\[0.1cm]
\vol(X) &= n_{t-1}^2-2e_{t-1}-n_{t-1}, \quad \text{and}\\[0.1cm]
\vol(\overline{X}) &= \vol(G_t) - \vol(X) = n_{t-1}^2-n_{t-1}.
\end{align*}
Hence, by Lemma~\ref{mix}, we have that
\begin{align*}
\lambda_t &\geq \frac{\vol(X)}{\vol(\overline{X})}\\
&= \frac{n_{t-1}^2-2e_{t-1}-n_{t-1}}{n_{t-1}^2-n_{t-1}}\\
&= 1-\frac{2e_{t-1}}{n_{t-1}^2-n_{t-1}}=\frac{1}4 -o(1),
\end{align*}
where the last equality follows since if $t\geq 2$, $G_{t-1}$ has an independent set of size $n_{t-1}/2$. Therefore, we have that
\[2e_{t-1}\leq 2\binom{n_{t-1}}2-2\binom{n_{t-1}/2}2=\frac{3}4 n_{t-1}^2-\frac{1}2 n_{t-1},
\]	
and the proof follows.
\end{proof}

From the proof of Theorem~\ref{thm:specgap}, it is evident that the constant of $1/4$ is not best possible, and if one had good enough bounds on the implied constants in Theorem~\ref{theorem density}, one could improve the result on the spectral gap as well.

\section{Structural Properties}\label{SRsec}

A graph is \emph{Hamiltonian} if there exists a cycle that visits each node exactly once, and is referred to as a \emph{Hamiltonian cycle}. Let $G$ be a graph with two disjoint cycles of arbitrary length, $C_{(1)}$ and $C_{(2)}$, and let $C_4$ be a $4$-cycle that has one edge in common with $C_{(1)}$, call it $e$, and one with $C_{(2)}$, say $f$. In this case, we call this copy of $C_4$ an \emph{edge switch} between $e$ and $f$, and note that an edge switch implies that there is a cycle that covers all the vertices in $V(C_{(1)})\cup V(C_{(2)})$.

We first note that after relatively few anti-transitive steps, ILM graphs become Hamiltonian.

\begin{theorem}
Let $G\neq K_1$, and $S=(s_0,s_1,s_2,\dots)$ a binary sequence with at least two non-consecutive 0's (that is, if there are exactly two 0's, there is a 1 in between them). If $\tau_1$ and $\tau_2$ are the first two indices such that $s_{\tau_1}=s_{\tau_2}=0$ and $\tau_1\leq\tau_2-2$, then for all $t\geq \tau_2+1$, $\ilm{t}SG$ is Hamiltonian.
\end{theorem}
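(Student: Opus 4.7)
The plan is to prove Hamiltonicity by strong induction on $t \ge \tau_2+1$, with three ingredients: a base case at $t=\tau_2+1$, preservation under transitive steps, and preservation under anti-transitive steps.

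Transitive-step preservation is straightforward. If $H$ is Hamiltonian with cycle $v_1 v_2 \cdots v_n v_1$, then $\mathrm{LT}(H)$ contains the cycle $v_1 v_1' v_2 v_2' \cdots v_n v_n' v_1$, since the clone $v_i'$ is adjacent to every vertex of $N_H[v_i]$ and hence to both $v_i$ and $v_{i+1}$. This reduces the inductive step to the case $s_{t-1}=0$, and in particular gives the conclusion for $t>\tau_2+1$ whenever $s_{t-1}=1$.

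For the base case $G_{\tau_2+1}=\mathrm{LAT}(G_{\tau_2})$, my plan is to construct a Hamiltonian cycle explicitly using the edge-switch construction introduced at the start of Section~\ref{SRsec}. The hypothesis $\tau_1 \le \tau_2-2$ guarantees at least one transitive step between the two anti-transitive steps, so $G_{\tau_2}$ contains both the large independent set of anti-clones added at time $\tau_1+1$ and the original vertices of $V(G_{\tau_1})$, which by then have large degree (namely $n_{\tau_1}-1$ in $G_{\tau_1+1}$, doubling with each subsequent transitive step). I would partition $V(G_{\tau_2+1})$ by creation time, build short cycles that visit each class using the natural adjacencies between paired vertices and their (anti-)clones, and sew these cycles together via $C_4$'s whose opposite edges lie in distinct small cycles.

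The main obstacle is the anti-transitive preservation step: assuming $G_t$ is Hamiltonian with cycle $C=v_1 v_2 \cdots v_{n_t} v_1$ and $s_t=0$, show that $\mathrm{LAT}(G_t)$ is Hamiltonian. My approach is to insert each anti-clone $v_i^*$ into $C$ between a pair of consecutive vertices $v_j,v_{j+1}$ with both $v_j,v_{j+1}\notin N_{G_t}[v_i]$, choosing distinct insertion edges for distinct clones; this produces a Hamiltonian cycle of $\mathrm{LAT}(G_t)$. The problem reduces to finding a system of distinct representatives in which each $v_i^*$ selects an edge of $C$ whose endpoints are both non-neighbors of $v_i$, which can be handled by Hall's marriage theorem. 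Verifying Hall's condition is the delicate part: for any subset $S\subseteq V(G_t)$, one must show that the number of edges of $C$ with both endpoints outside $\bigcap_{v\in S} N_{G_t}[v]^c$-covering sets is at least $|S|$. I would establish this using the fact that, since $G_t$ has undergone at least two anti-transitive steps, Theorem~\ref{theorem density} gives $\Omega(n_t^2)$ non-edges in $G_t$, and the independent-set structure inherited from those steps should provide enough "non-neighbor pairs" along $C$ to satisfy Hall's condition, possibly via a defect-version argument.
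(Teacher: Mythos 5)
Your reduction to the case $s_{t-1}=0$ via the cycle $v_1v_1'v_2v_2'\cdots v_nv_n'v_1$ is fine, but the anti-transitive preservation step, which carries the whole argument, has a genuine gap. Your insertion scheme requires a \emph{perfect} matching between the $n_t$ clones and the $n_t$ edges of $C$: since clones born in the same step are pairwise non-adjacent, no edge of $C$ can absorb two clones, so every clone needs its own edge and every edge of $C$ must be used. Hall's condition is then extremely tight, and even the single-vertex requirement can fail: immediately after an anti-transitive step every vertex has degree exactly $n_t/2-1$, hence exactly $n_t/2$ non-neighbors, and nothing prevents these from alternating with the $n_t/2$ vertices of $N[v_i]$ around $C$, in which case \emph{no} edge of $C$ has both endpoints outside $N[v_i]$ and $v_i^*$ cannot be inserted at all. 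The appeal to Theorem~\ref{theorem density} does not help here, because a global count of $\Omega(n_t^2)$ non-edges says nothing about how the non-neighbors of a fixed vertex are arranged along a particular Hamiltonian cycle handed to you by the inductive hypothesis. The base case is also only a plan, not a proof.

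The paper avoids both difficulties by never trying to push a Hamiltonian cycle of $G_{t-1}$ through an anti-transitive step. Instead, it takes $\tau_2$ to be the \emph{last} index below $t$ with $s_{\tau_2}=0$, observes that after the first anti-transitive step $\Delta(G_\ell)=n_\ell/2-1$, so $\overline{G_\ell}$ has minimum degree $n_\ell/2$ and is Hamiltonian by Dirac's theorem, and upgrades this to a Hamiltonian cycle of $\overline{G_{\tau_2}}$ containing four consecutive vertices that are pairwise non-adjacent in $G_{\tau_2}$. Applying the anti-transitive step at time $\tau_2$, the relation ``$v_i$ non-adjacent to $v_{i+1}$ in $G_{\tau_2}$'' yields the edges $v_iv_{i+1}^*$ and $v_i^*v_{i+1}$, producing two disjoint cycles of length $n_{\tau_2+1}/2$, which the four independent consecutive vertices allow you to merge with a single edge switch; transitive steps then finish the job. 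If you want to salvage your outline, you should replace the Hall's-theorem step with an argument of this complement-plus-Dirac type, or otherwise gain control over the cyclic arrangement of non-neighbors, which your current sketch does not provide.
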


\begin{proof}
Let $G_t=\ilm{t}SG$. Let $\tau_1$ be the lowest index such that $s_{\tau_1}=0$, and let $\tau_1+2\leq \tau_2\leq t-1$ be the largest index such that $s_{\tau_2}=0$. Note that for all $\ell\geq \tau_1+1$, $\Delta(G_\ell)=\frac{n_\ell}2-1$, so $\delta(\overline{G_{\ell}})=\frac{n_\ell}2$, and since $|V(G_\ell)|\geq 4$, by Dirac's Theorem, $\overline{G_\ell}$ is Hamiltonian. Let $uv\in E(\overline{G_{\tau_2-1}})$ be one of the edges in a Hamiltonian cycle in $\overline{G_{\tau_2-1}}$. We claim that $\overline{G_{\tau_2}}$ has a Hamiltonian cycle with four consecutive vertices that form a clique.
	
Indeed, since in $\overline{G_{\tau_2}}$, the vertices in $V(G_{\tau_2})\setminus V(G_{\tau_2-1})$ form a clique, we can traverse the Hamiltonian path in $\overline{G_{\tau_2-1}}$ from $u$ to $v$, then go to the clone of $v$, and then visit each other new clone in $\overline{G_{\tau_2}}$ before ending at the clone of $u$, and then back to $u$ to close the cycle. In particular, since there are at least $4$ clones, this implies that $\overline{G_{\tau_2}}$ contains a Hamiltonian cycle with four consecutive vertices that form a clique.
	
Let $C=(v_1,v_2,\dots,v_{n_{\tau_2}},v_1)$ be such a Hamiltonian cycle, and say $\overline{G_{\tau_2}}[\{v_1,v_2,v_3,v_4\}]$ is a $K_4$. In $G_{\tau_2+1}$, denote the clone of $v_i$ as $v_i^*$. Note that $v_iv_{i+1}^*$ and $v_i^*v_{i+1}$ are in $E(G_{\tau_2+1})$ for each $1\leq i\leq n_{\tau_2}$ (indices taken modulo $n_{\tau_2}$). The edges of this form give us two disjoint cycles, each of length $n_{\tau_2+1}/2$, one which includes the edge $v_1v_2^*$, and the other which includes the edge $v_3^*v_4$. Note that $v_1v_3^*$ and $v_2^*v_4$ are also edges in $G_{\tau_2+1}$ since $\{v_1,v_2,v_3,v_4\}$ is an independent set in $G_{\tau_2}$. Thus, we have an edge switch between the two cycles of length $n_{\tau_2+1}/2$, so there exists a Hamiltonian cycle.
\end{proof}

The preceding theorem gives us that $\ilat{t}G$ is Hamiltonian for $t\geq 3$ for any graph $G\neq K_1$. Interestingly, while the anti-transitive model becomes Hamiltonian rather quickly, the transitive model can take a long time to become Hamiltonian. Let $G$ be a connected graph, and let $\zeta(G)$ be the smallest integer such that $\ilt{t}G$ is Hamiltonian for all $t\geq \zeta(G)$. Let $\zeta_n=\max\{\zeta(G): |V(G)|=n,G\text{ is connected}\}$. It is not hard to see that $\zeta_1=2$ and $\zeta_2=1$. We will see that $\zeta_n$ is finite, but that it grows with $n$.

\begin{theorem}
For all $n\geq 3$, we have that
\[
\log_2(n-1)\leq \zeta_n \leq \lceil\log_2(n-1)\rceil+1
\]
\end{theorem}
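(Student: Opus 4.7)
\emph{Lower bound.} I would take the star $G = K_{1, n-1}$ with center $c$ and leaves $\ell_1, \ldots, \ell_{n-1}$, and argue that $\zeta(G) \geq \log_2(n-1)$. For $v \in V(G)$, let $D_v(t)$ denote the set of iterated clones of $v$ in $\ilt{t}{G}$, so $|D_v(t)| = 2^t$. A short induction on $t$ shows that for $i \ne j$, no edge of $\ilt{t}{G}$ runs between $D_{\ell_i}(t)$ and $D_{\ell_j}(t)$: any new edge introduced at the $t$-th transitive step has the form $xy'$ with $y'$ a clone of $y \in N[x]$, so $x$ and $y$ lie in the same cluster by the inductive hypothesis, and $y'$ inherits the cluster of $y$. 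Hence every edge leaving a leaf cluster has its other endpoint in $D_c(t)$. Any Hamiltonian cycle $H$ of $\ilt{t}{G}$ enters and exits each of the $n-1$ leaf clusters, contributing at least $2(n-1)$ edges with one endpoint in $D_c(t)$; since each vertex of $D_c(t)$ contributes only two edge-endpoints to $H$, this count is bounded above by $2|D_c(t)| = 2^{t+1}$. Therefore $2^{t+1} \geq 2(n-1)$, giving $t \geq \log_2(n-1)$.

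\emph{Upper bound.} Set $t = \lceil \log_2(n-1) \rceil + 1$, so $2^t \geq 2(n-1)$. Since $\mathrm{LT}$ preserves spanning-subgraph containment, $\ilt{t}{T}$ is a spanning subgraph of $\ilt{t}{G}$ for any spanning tree $T \subseteq G$, so it suffices to construct a Hamiltonian cycle in $\ilt{t}{T}$. I would index the vertices of $\ilt{t}{T}$ by pairs $(v, s)$ with $v \in V(T)$ and $s \in \{0,1\}^t$ recording the clone history at each step, and prove by induction on $t$ the clean adjacency rule: for $uv \in E(T)$, the pair $(v, s), (u, r)$ is an edge iff $s$ and $r$ have disjoint supports (never both equal to $1$ in the same coordinate). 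I would also verify that the induced subgraph on a single cluster $D_v(t)$ is isomorphic to $\ilt{t}{K_1}$, which is Hamiltonian for $t \geq 2$ and can be partitioned into any prescribed $k \leq 2^{t-1}$ vertex-disjoint spanning paths by cutting a Hamiltonian cycle at $k$ points.

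The construction then lifts an Eulerian circuit $W$ of the doubled tree $2T$, a closed walk of length $2(n-1)$ visiting each $v \in V(T)$ exactly $\deg_T(v)$ times. I would partition each cluster $D_v(t)$ into $\deg_T(v)$ ordered sub-paths, one for each visit of $W$ to $v$, using a Hamiltonian cycle of $\ilt{t}{K_1}$, and concatenate them in the cyclic order dictated by $W$. The main obstacle, and the crux of the argument, is the \emph{gluing step}: across each tree edge $uv$ traversed by $W$, the outgoing endpoint label in $D_v(t)$ and the incoming endpoint label in $D_u(t)$ must have disjoint supports (by the adjacency rule). I would process the $2(n-1)$ tree-edge crossings in the cyclic order of $W$ and commit compatible endpoint labels one at a time, using that each cluster has $2^t \geq 2(n-1)$ labels available so that an unused label with disjoint support from the already-committed partner can always be found.
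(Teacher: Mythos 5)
Your lower bound is correct and is essentially the paper's argument in different clothing: the paper observes that the descendants of the centre of $K_{1,n-1}$ form a vertex cut of size $2^t$ whose removal leaves at least $n-1$ components, while you count edge-endpoints of the Hamiltonian cycle inside the centre cluster; both yield $2^t\geq n-1$. Your upper-bound skeleton (reduce to a spanning tree $T$, decompose into clusters $D_v(t)\cong\ilt{t}{K_1}$, and record clone history by a label in $\{0,1\}^t$ so that for $uv\in E(T)$ the vertices $(v,s)$ and $(u,r)$ are adjacent exactly when $s$ and $r$ have disjoint supports) is sound and matches the structure the paper exploits. The difference lies in how the cluster cycles are merged, and that is where your proposal has a genuine gap.

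The gluing step is not established by the counting argument you give. The number of labels whose support is disjoint from a committed label $r$ is $2^{t-|\mathrm{supp}(r)|}$, not $2^t$; if at some crossing you commit a label of large support (in the extreme, full support), the only compatible partner is the all-zeros label, which may already be used, and the bound $2^t\geq 2(n-1)$ does not rescue you. Worse, the endpoint labels in a cluster $D_v(t)$ cannot be committed independently one crossing at a time: they must arise as the endpoints of the $\deg_T(v)$ arcs obtained by deleting $\deg_T(v)$ edges from a single Hamiltonian cycle of $\ilt{t}{K_1}$, and each arc rigidly pairs its two endpoints (the ``incoming'' and ``outgoing'' slots of one visit), so the choices interact globally within each cluster and across the two traversals of each tree edge. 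The paper closes exactly this gap with two structural claims: its Claim~1 provides a Hamiltonian cycle of $\ilt{t}{K_1}$ containing every clone edge $xx'$, and its Claim~2 provides, between the two clusters of a tree edge, a perfect ``paired matching'' in which $x$ is matched to $y$ so that both $xy'$ and $x'y$ are edges --- in your labels, cut at $(v,s0)(v,s1)$ and $(u,r0)(u,r1)$ with $\mathrm{supp}(s)\cap\mathrm{supp}(r)=\emptyset$ and reroute through $(v,s1)(u,r0)$ and $(u,r1)(v,s0)$. One such edge switch per tree edge, performed at $\deg_T(v)\leq 2^{t-1}$ distinct clone edges in each cluster, merges all the cluster cycles into a single Hamiltonian cycle and handles both traversals of each tree edge simultaneously. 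If you replace your greedy label selection with this paired-matching choice of cut edges, your Eulerian-circuit framework goes through; as written, the crux step is asserted rather than proved.
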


\begin{proof}
For the lower bound, consider the star, $K_{1,n-1}$. Let $A_t$ denote the set of all the vertices in $\ilt{t}{K_{1,n-1}}$ that are descendants of the center vertex in the star (including the center vertex). Note that $A_t$ is a vertex-cut of size $2^t$, whose removal leaves the graph with at least $n-1$ components. Thus, if $2^t<n-1$, then $\ilt{t}{K_{1,n-1}}$ cannot be Hamiltonian, which provides the lower bound.
	
For the upper bound, fix a graph $G$ on $n$ vertices, and let $G_t=\ilt{t}G$. First note that if $G_t$ is Hamiltonian, then $G_{t+1}$ contains a Hamiltonian cycle in which each vertex from $G_t$ is adjacent to its clone in $G_{t+1}$. Indeed, given a Hamiltonian cycle $C=(v_1,v_2,\dots,v_{n_t},v_1)$ in $G_t$, we have the Hamiltonian cycle $C^*=(v_1,v_1',v_2,v_2',\ldots,v_{n_t},v_{n_t}',v_1)$ in $G_{t+1}$, where $v_i'\in V(G_{t+1})\setminus V(G_t)$ is the transitive clone of $v_i$. Thus, to bound $\zeta_n$, it suffices to find the first $t$ such that $G_t$ is Hamiltonian.
	
Before proceeding, we must establish two facts about $\ilt{t}{K_1}$.

\medskip
	
\noindent \textbf{Claim 1:}\label{claim Hcycle in K_1}
For all $t\geq 2$, $\ilt{t}{K_1}$ contains a Hamiltonian cycle in which each vertex of $\ilt{t-1}{K_1}$ is adjacent to its clone in $\ilt{t}{K_1}$.

\medskip

It is straightforward to verify that $\ilt{2}{K_1}$ has such a Hamiltonian cycle, and by an earlier argument, this property is preserved when $t>2$, proving Claim~1.
	
\medskip
	
\noindent \textbf{Claim 2:}\label{claim perfect matching in K_1}
If $V(K_1)=\{v\}$ and $V(\ilt{1}{K_1})=\{u,v\}$, then for all $t\geq 1$, $\ilt{t}{K_1}$ contains a perfect matching that matches the descendants of $u$ to the descendants of $v$ that are not descendants of $u$. For $t\geq 2$, one such matching has the property that the edges can be paired off in such a way that if $e$ and $f$ are paired, there exists two vertices, $x,y\in V(\ilt{t-1}{K_1})$ such that $e=xy'$ and $f=x'y$. We refer to these as a \emph{paired matching}.

\medskip	

Claim~2 is true in $\ilt{1}{K_1}=K_2$. Assume Claim~2 holds for $\ilt{k-1}{K_1}$ for some $k>1$. Let $M_{k-1}$ be the desired matching in $\ilt{k-1}{K_1}$. Let $M_k=\{xy', x'y : xy\in M_{k-1}\}$. We then have that $M_k$ is a desired matching in $\ilt{k}{K_1}$, proving Claim~2.
	
Let $T$ be a spanning tree of $G$, and let $t_0=\lceil\log_2(\Delta(T))\rceil+1$. We will show that $G_{t_0}$ is Hamiltonian. Let $V(T)=\{v_1,v_2,\dots,v_{n_0}\}$, and let
\[
V_{i,t}=\{v\in V(G_t): v\text{ is a descendant of }v_i\}.
\]
Note that for all $t\geq 0$, $G_t[V_{i,t}]=\ilt{t}{K_1}$ and if $v_iv_j\in E(T)$, then $G_t[V_{i,t}\cup V_{j,t}]=\ilt{t+1}{K_1}$. By Claim~1, for each $1\leq i\leq n_0$, we can find a cycle $C_{(i)}$ that covers $V_{i,t}$, and has the property that the vertices in $V_{i,t-1}$ are adjacent to their clones in the cycle.
	
By Claim~2, for each edge $v_1v_j\in E(T)$, we can find a paired matching from $V_{i,t_0}$ to $V_{j,t_0}$. For each edge $v_1v_j\in E(T)$, given any vertex $v\in V_{i-1,t_0}$, there exists some $u\in V_{j-1,t_0}$ such that there is an edge switch between $vv'$ and $uu'$. This property guarantees that we can build a Hamiltonian cycle by iteratively using edge switches between edges of the form $uu'$ and $vv'$, one for each edge in $T$. As long as we never perform two edge switches at the same vertex, this will give us a Hamiltonian cycle. Since $t_0\geq\log_2(\Delta(T))+1$, we have that $|V_{i,t_0-1}|=2^{t_0-1}\geq \Delta(T)$. Thus, we can avoid performing two edge switches at the same vertex, so $G_{t_0}$ is Hamiltonian. Since $n_0-1\geq \Delta(T)$, the bound follows.
\end{proof}

As our final contribution, we consider the graphs which appear as induced subgraphs of ILM graphs. Interestingly, every finite graph eventually appears as an induced subgraph in $\ilm{t}SG$, regardless of the initial graph or sequence.

\begin{theorem}
If $F$ is a graph, then there exists some constant $t_0=t_0(F)$ such that for all $t\geq t_0$, all graphs $G$, and all binary sequences $S$, $F$ is an induced subgraph of $\ilm{t}SG$.
\end{theorem}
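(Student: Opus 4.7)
My plan is to proceed by induction on $k = |V(F)|$. The base case $k=1$ is immediate, since every $\ilm{t}SG$ has at least $|V(G)| \geq 1$ vertex and a single vertex is an induced $K_1$.

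For the inductive step, suppose the statement holds for every graph on $k$ vertices, and let $F$ have $k+1$ vertices. I would choose any $u \in V(F)$, set $F' = F - u$, and write $A = N_F(u) \subseteq V(F')$ for the prescribed neighborhood of $u$ in the rest of $F$. By the inductive hypothesis there is some $t_1 = t_0(F')$ so that $\ilm{t}SG$ contains an induced copy of $F'$ whenever $t \geq t_1$, for every $G$ and every $S$. Choose a constant $c = c(k)$ to be fixed later and consider $t \geq t_1 + c$. Pick an induced copy of $F'$ on a vertex set $W = \{w_1,\dots,w_k\} \subseteq V(\ilm{t-c}SG)$; the copy persists in $\ilm{t}SG$, so it is enough to locate a vertex $w_0$ in $\ilm{t}SG \setminus W$ whose neighborhood inside $W$ equals the subset $B \subseteq W$ corresponding to $A$ under the isomorphism between $F'$ and $W$.

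The heart of the argument is the following key lemma: for a constant $c = c(k)$, every graph $H$ (with enough vertices), every $W \subseteq V(H)$ of size $k$, every $B \subseteq W$, and every binary sequence $S$, the graph $\ilm{c}SH$ contains a vertex $w$ with $N(w) \cap W = B$. To prove it I would track $W$-neighborhoods through the two operations. At an LT step, the transitive clone $y'$ of a vertex $y$ satisfies $N(y') \cap W = N_H[y] \cap W$, i.e.\ it realizes the closed $W$-neighborhood of $y$; at an LAT step, the anti-transitive clone $y^*$ satisfies $N(y^*) \cap W = W \setminus (N_H[y] \cap W)$, realizing the set-complement in $W$ of the closed $W$-neighborhood. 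Starting from the many distinct $W$-neighborhoods that already occur in $H$ and applying these two operations for $c(k)$ steps, I would argue that every subset of $W$ eventually appears as the $W$-neighborhood of some vertex, which completes the induction.

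The principal obstacle is proving the key lemma \emph{uniformly} in $S$. An adversarial sequence can be an arbitrarily long run of $1$'s (only LT steps) or $0$'s (only LAT steps), and each operation on its own produces only a restricted family of $W$-neighborhoods: closed neighborhoods intersected with $W$ under LT, and their complements within $W$ under LAT. The delicate step is therefore to show that, however these $c(k)$ LT and LAT steps interleave, the family of $W$-neighborhoods that are realized grows to cover all of $2^W$. I expect to handle this by an auxiliary induction on $|B|$ (or on a symmetric-difference measure to a fixed base neighborhood), together with an explicit construction of the required $w_0$ as a descendant with a carefully chosen binary ``address'' of length $c(k)$ relative to $W$, exploiting the fact that LT clones preserve closed $W$-neighborhoods while LAT clones invert them.
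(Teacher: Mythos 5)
Your overall strategy---induct on $|V(F)|$, find an induced copy of $F'=F-u$, and then extend it by a single vertex realizing a prescribed $W$-neighborhood $B$---has a genuine gap, and it sits exactly at the point you flagged: your key lemma is not merely hard to prove uniformly in $S$, it is false as stated. Take $S$ to be the all-ones sequence and $H$ a graph in which $w_1,w_2\in W$ lie in different connected components (for instance $G=2K_1$, so that $\ilt{t}{G}$ is disconnected for every $t$, with $W=\{w_1,w_2\}$ taking one vertex from each component). A transitive clone $y'$ satisfies $N(y')=N[y]$, so LT steps preserve the component structure and never create a vertex adjacent to both components; hence no vertex of $\ilm{c}S{H}$ ever has $W$-neighborhood $B=\{w_1,w_2\}$, for any $c$. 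This failure is realized inside your induction: for $F=P_3$ with $u$ its center, $F'=F-u$ is $2K_1$ and $B$ is all of $W$, and your inductive hypothesis guarantees only that \emph{some} induced copy of $2K_1$ exists---it may well straddle two components, in which case the extension step fails for the copy you are handed. The theorem remains true (an induced $P_3$ appears inside one component), but your argument gives no control over where the copy of $F'$ sits, and that control is exactly what is missing. Repairing this would require strengthening the induction so that the copy of $F'$ is confined to a controlled substructure, which amounts to rebuilding the proof along different lines.

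For comparison, the paper's proof is two-staged and avoids neighborhood realization entirely. First it shows that $F$ embeds as an induced subgraph of $\ilt{k}{K_1}$ with $k=\ell+\binom{\ell}{2}-1$ and $\ell=|V(F)|$: a clique $K_\ell$ appears after $\ell-1$ transitive steps, and one can delete any prescribed edge $uv$ of an induced subgraph by replacing $u$ and $v$ with their (mutually non-adjacent) transitive clones $u',v'$, since $N(u')=N[u]$ and $N(v')=N[v]$. Second, it shows that every ILM graph contains an induced $\ilt{k}{K_1}$ within $2k$ steps: one LT step advances an induced copy of $\ilt{r}{K_1}$ to one of $\ilt{r+1}{K_1}$, and so do two consecutive LAT steps (via the anti-clone of the anti-clone, whose neighborhood restricted to the old vertex set equals the old closed neighborhood), and each of $k$ consecutive disjoint pairs of bits of $S$ contains either a $1$ or two $0$'s. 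Both stages are uniform in $S$ and $G$ by construction, which is precisely the uniformity your approach cannot secure.
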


\begin{proof}
The proof will come in two stages. First, we will show that for some constant $k=k(F)$, $\ilt{k}{K_1}$ contains an induced copy of $F$, then we will show that for large enough constant $t_0=t_0(k)$, $\ilm{t_0}S{G}$ contains an induced copy of $\ilt{k}{K_1}$.
	
Let $|V(F)|=\ell$. First, we claim that $\ilt{\ell+\binom{\ell}2-1}{K_1}$ contains an induced copy of $F$. Towards induction, first note that the clique number $\omega(\ilt{\ell-1}{K_1})\geq\ell$ since at each time-step the clone of the original vertex is adjacent to all the other vertices that were clones of the original vertex, so $K_\ell$ is a subgraph of $\ilt{\ell-1}{K_1}$. We will show that if $F$ is an induced subgraph of $\ilt{r}{K_1}$, then $F-e$ is an induced subgraph of $\ilt{r+1}{K_1}$ for any edge $e\in E(F)$. Indeed, let $e=uv$ be an edge of $F$, and let $V(F)$ be the vertices in $\ilt{r}{K_1}$ that induce a copy of $F$. We then have that $(V(F)\setminus\{u,v\})\cup\{u',v'\}$ induces a copy of $F-e$ since $N_r[u]=N_{r+1}(u')$, and $N_r[v]=N_{r+1}(v')$, while $u'$ and $v'$ are non-adjacent. Via induction on the number of missing edges, and the fact that $\ilt{r}{K_1}$ is an induced subgraph of $\ilt{r+1}{K_1}$, we derive that $\ilt{\ell+\binom{\ell}2-1}{K_1}$ contains an induced copy of every graph on $\ell$ vertices.
	
We will show that for any fixed $k$, for any graph $G$, and any binary sequence $S$ $\ilm{2k}SG$ contains an induced copy of $\ilt{k}{K_1}$. First note that if $\ilm{t}SG$ contains an induced copy of $\ilt{r}{K_1}$ for any integer $r$, and we perform a transitive step, then $\ilm{t+1}SG$ contains an induced copy of $\ilt{r+1}{K_1}$ on the vertices from the copy of $\ilt{r}{K_1}$ along with their clones. We claim that if $\ilm{t}SG$ contains an induced copy of $\ilt{r}{K_1}$, and we perform two anti-transitive steps, then $\ilm{t+2}SG$ contains an induced copy of $\ilt{r+1}{K_1}$.
	
For each vertex $x\in V(\ilm{t}SG)$, let $x^{**}\in V(\ilm{t+2}SG)\setminus V(\ilm{t+1}SG)$ denote the anti-transitive clone of the anti-transitive clone of $x$. Note that the neighborhood $N_t[x]=N_{t+2}(x^{**})\cap V(\ilm{t}SG)$, so $V(\ilt{r}{K_1})$ along with the set of anti-transitive clones $\{x^{**}\in~V(\ilm{t+2}SG) : x\in~V(\ilt{r}{K_1})\}$ induce a copy of $\ilt{r+1}{K_1}$. Thus, any combination of $k$ transitive steps or back-to-back (pairwise-disjoint) pairs of anti-transitive steps will give us an induced copy of $\ilt{k}{K_1}$. Observe that such a combination must exist within the first $2k$ time-steps since if we partition the first $2k$ elements of the sequence $S$ into $k$ contiguous pairs, each pair that does not contain a $1$ contains two 0's, so each pair corresponds to at least one transitive step, or back-to-back anti-transitive steps. Thus, $\ilm{2k}SG$ contains an induced copy of $\ilt{k}{K_1}$. We then have that $\ilm{2(\ell+\binom{\ell}2-1)}SG$ contains an induced copy of $\ilt{\ell+\binom{\ell}2-1}{K_1}$, which in turn contains an induced copy of $F$, completing the proof.
\end{proof}

\section{Conclusion and future directions}

We introduced the Iterated Local Model (ILM) for social networks, which generalized the ILT and ILAT models previously studied in \cite{ilt1,ilt} and \cite{ilat}, respectively. We proved that graphs generated by ILM densify over time for any given sequence $S$. For the sequences $S$ with bounded gaps between zeros, the generated graphs are dense. The clustering coefficient of ILM graphs generated by sequences $S$ with bounded gaps between zeros, in contrast to ILT graphs, is bounded away from zero. We showed that the chromatic number of an ILM graph at time-step $t$ is the chromatic number of the initial graph plus either $t$ or $t-1$. The domination number of an ILM graph with at least two bits equal to zero eventually will be at most 3, and the diameter of an ILM graph becomes 3 eventually. The graphs generated by ILM exhibit bad spectral expansion as found in social networks. Structural properties of ILM graphs were also studied. We proved that after relatively few anti-transitive steps, ILM graphs become Hamiltonian. In addition, we showed that regardless of the sequence $S$ and the initial graph $G$ every graph eventually appears as an induced subgraph in $\ilm{t}SG$.

Several open problems remain concerning ILM graphs. While the eigenvalues of the ILT model are well-understood and have a recursive structure, much less is known about the eigenvalues of the ILAT model, and hence, for ILM graphs. We do not have a precise asymptotic value for the clustering coefficient of ILM graphs.

Another open direction concerns the domination number. In Theorem~\ref{theorem dom3}, we need two bits in $S$ to be $0$. It remains to determine what can occur after a single anti-transitive step. Any set $S\subseteq V(G)$ such that $S$ dominates $G$ and \emph{totally} dominates $\overline{G}$ (in total domination, vertices do not dominate themselves), will also dominate in $\mathrm{LAT}(G)$. It is unclear if minimizing this ``graph and complement'' domination parameter will give the correct domination number after one anti-transitive step. Note that if $G$ has a dominating vertex, then there is no total dominating set in $\overline{G}$.

Another direction we did not explore are graph limits for dense sequences arising from ILM graphs, such as for ILAT graphs. See \cite{L} for background on graph limits. The convergence of a graph sequence $G_n$ of simple graphs whose number of vertices tends to infinity is based on the homomorphism density of the graphs $G_n$.  Let $F$ be a fixed simple graph, then the homomorphism density of a graph $G$ is
\[
t(F,G) =\frac{\mathrm{hom}(F,G)}{|V (G)|^{|V (F)|}}
\]
where $\mathrm{hom}(F,G)$ denote the number homomorphisms of $F$ into $G$. The sequence $\{G_n\}$ converges if for every simple graph $F$, the sequence $\{t(F,G_n)\}$ converges. Since ILAT graphs are all eventually dense, we may study the convergence of the sequence of graphs generated by the ILAT model.

A randomized version of ILM would be an intriguing model. Duplication models \cite{CL}, where we randomize which edges to clone are difficult to rigorously analyze given their dependency structure. One approach would be to consider random binary input sequences $S$, and study properties of ILM graphs for such random sequences. Random sequences generated by flipping a fair coin will not have bounded gaps as we expect runs of $1$'s of length on the order of $\log(t)$ to occur, and so many of the results developed in this work do not immediately apply to these random models.

\end{document}